\renewcommand*{\backref}[1]{}
\renewcommand*{\backrefalt}[4]{%
	\ifcase #1 (Not cited.)%
	\or        (Cited on page~#2.)%
	\else      (Cited on pages~#2.)%
	\fi}
\newcommand{\K}{K\"ahler}
\newcommand{\ie}{{\em i.e. }}
\newcommand{\eg}{{\em e.g. }}
\newcommand{\RR}{\mathbb{R}}
\newcommand{\ZZ}{\mathbb{Z}}
\def\eqref#1{(\ref{#1})}
\newcommand{\6}{\partial}
\def\1{\sqrt{-1}\:}
\newcommand{\cntrct}                
{\hspace{2pt}\raisebox{1pt}{\text{$\lrcorner$}}\hspace{2pt}}
\newcommand{\cac}{\mathcal{C}}
\newcommand{\caf}{\mathcal{F}}
\newcommand{\cat}{\mathcal{T}}
\newcommand{\ii}{\mathrm{i}}
\renewcommand{\bar}{\overline}
\renewcommand{\phi}{\varphi}
\renewcommand{\epsilon}{\varepsilon}
\newcommand{\Ric}{\operatorname{Ric}}
\renewcommand{\Im}{\operatorname{Im}}
\numberwithin{equation}{section}
\def\blacksquare{\hbox{\vrule width 5pt height 5pt depth 0pt}}
\def\endproof{\blacksquare}
\newcommand{\samethanks}[1][\value{footnote}]{\footnotemark[#1]}
\newcounter{Mycounter}[section]
\newcounter{lemma}[section]
\newcounter{claim}[section]
\newcounter{sublemma}[section]
\newcounter{corollary}[section]
\newcounter{theorem}[section]
\newcounter{conjecture}[section]
\newcounter{proposition}[section]
\newcounter{definition}[section]
\newcounter{example}[section]
\newcounter{remark}[section]
\newcounter{problem}[section]
\newcounter{question}[section]
\begin{document}

\begin{center}
{\LARGE\bf
Vaisman manifolds and transversally {\K}-Einstein metrics}\\[3mm]
{\large\bf
Vladimir Slesar\footnote{Partially supported by Romanian Ministry of Education and Research, Program PN-III, Project number PN-III-P4-ID-PCE-2020-0025, Contract  30/04.02.2021} and Gabriel-Eduard V\^ilcu\samethanks{}

}
\end{center}

\hfill

{\small
\hspace{0.15\linewidth}
\begin{minipage}[t]{0.7\linewidth}
{\bf Abstract}
We use the transverse {\K}-Ricci flow on the canonical foliation of a closed Vaisman manifold to deform the Vaisman metric into another Vaisman metric  with a transverse {\K}-Einstein structure. We also study the main features of such a manifold. Among other results, using techniques from the theory of parabolic equations, we obtain a direct proof for the short time existence of the solution for transverse {\K}-Ricci flow on Vaisman manifolds, recovering in a particular setting a result of Bedulli, He and Vezzoni [J. Geom. Anal. 28, 697--725 (2018)], but without employing the Molino structure theorem. Moreover, we investigate Einstein-Weyl structures in the setting of Vaisman manifolds and find their relationship with quasi-Einstein metrics. Some examples are also provided to illustrate the main results. \\
\end{minipage}
}

\hspace{5mm}

\noindent{\bf Keywords: Vaisman manifold, transverse {\K}-Ricci flow, transverse {\K}-Einstein metric, quasi-Einstein metric}


\tableofcontents

\date{\today }

\section{Introduction}

Let us consider a Hermitian manifold $(M, J, g)$ and $\omega(\cdot, \cdot):=g(\cdot, J\cdot)$ the corresponding fundamental 2-form. We assume throughout the paper that the manifold $M$ is closed, connected, with $\dim_\RR M\ge 2$.

If there is a nonzero 1-form $\theta$ such that $d\theta=\theta\wedge \omega$, with $\theta$ parallel with respect to the Levi-Civita connection of the metric $g$, then the manifold is said to be a {\em Vaisman manifold}. Moreover, the 1-form $\theta$ is called the \emph{Lee form}.

Being parallel, the Lee form is also closed, and Vaisman manifolds are locally conformally {\K} (LCK). We refer to \cite{Dr-Or, Or-Ve3} for definition and properties of LCK manifolds.  Note that most part of known LCK manifolds are in fact Vaisman. Actually, it is known that all compact homogeneous LCK manifolds are Vaisman, according to a result proved in \cite{Ga-Mo-Or}.

Many significant results on the geometry of Vaisman manifolds have been obtained over time (see, e.g., the recent works \cite{Al-Ha-Ka, Ca-Ni-Ma-Yu2, Ga-Mo-Or, In-Le, Ma-Mo-Pi, Or-Ve2, Pi} and the references therein). A remarkable feature of the Vaisman manifolds is the canonical foliated structure \cite{Ch-Pi, Ts}. The leaves of the foliations are two-dimensional, local Euclidean, while the transverse part of the two-fundamental form, denoted in this paper by $\omega^T_0$ (see Section \ref{transverse_K_R_flow}), is in fact {\K} \cite{Va}. Note that it is a commune feature that Vaisman manifolds share with Sasaki manifolds. We would also like to point out that the relation between Vaisman and Sasakian manifolds via the mapping torus construction was explored in \cite{Ba-Ma-Op}. It was obtained that a compact Vaisman manifold is finitely covered by the product of a compact Sasakian manifold and a circle. This nice result gives topological information about compact Vaisman manifolds. We also note that a Hard Lefschetz Theorem for the de Rham cohomology and for the basic cohomology with respect to the Lee vector field on compact Vaisman manifolds was proved in \cite{Ca-Ni-Ma-Yu1}. The proof of this theorem uses some results regarding the basic cohomologies of compact oriented Riemannian manifolds with respect to some Riemannian foliations.

As the transverse metric has a {\K} structure, it is natural to investigate if we could find Vaisman metrics with transverse {\K}-Einstein structure.

In the setting of compact Sasaki manifolds, under assumption that the basic Chern class $c_b^1(M)=k[\omega^T_0]$ (see Section \ref{prelim} for the definition of basic Chern class), with $k=0$ or $k=-1$, in \cite{Sm-Wa-Zh} the authors present a method for obtaining Sasaki metrics which are transversally {\K}-Einstein. This notion is equivalent to $\eta$-Einstein metrics (see \cite{Bo-Ga-Ma} for definition and properties). More precisely, under assumption that the basic Chern class $c_b^1(M)=k[\omega^T_0]$, 
with $k=0$ or $k=-1$, they use a deformation of the Sasaki structure based on a transverse {\K}-Ricci flow (called Sasaki-Ricci flow). Note that in \cite{Wa-Zh}, for a Sasaki manifold of dimension 3 with  $c_b^1(M)>0$ it was proven that a Sasaki-Ricci soliton can be obtained.

It is thus natural to ask if on a Vaisman manifold (which is also endowed with a transverse {\K} structure, but has a foliated structure of dimension 2, more complicated than a Sasakian Riemannian flow) a transverse {\K}-Ricci flow can be used in a similar manner, obtaining thus a counterpart of $\eta$-Einstein manifolds.

Note that in \cite{El Ka}, using the Molino structure theorem for Riemannian foliations \cite{Mo}, the author devised a Calabi-Yau theory for the transverse geometry of a foliation using the continuity method. Using this theory, in \cite{Be-He-Ve} the short time solution was obtained and long time behavior of transverse {\K}-Ricci flow was investigated.
In the first part of this paper we show that for the particular setting of Vaisman manifolds a direct proof for the short time existence of the solution of the {\K}-Ricci flow. This proof is based on a idea developed in \cite{Sm-Wa-Zh}, using only standard results from the theory of parabolic equations, thus without employing the structure theorem of Molino.

We should also check that the transverse {\K}-Ricci flow preserve other structures (such as Sasakian or Vaisman), and as remarked in \cite[Section 3]{Sm-Wa-Zh}, this is not possible by direct computation. Also, we should emphasize the fact that, in general, the class of Vaisman structures is not stable under small deformations \cite{Be}.

Similar to \cite{Sm-Wa-Zh}, to solve this problem, for the particular case when $c_b^1(M)=-[\omega^T_0]$, we associate to the flow a compatible deformation of the Vaisman structure. More precisely, we deform also the complex structure of the manifold, fixing the Lee form.  As a consequence we obtain a new Vaisman structure on the manifold with a {\K}-Einstein transverse metric.
We also hope that this approach will be useful in the context of an increased interest in the study of Chern-Ricci type flows on non-{\K} manifolds (see \eg \cite{To-We1, Gi-Sm}).

In the final part of the paper we investigate several properties of the Vaisman manifolds with transverse {\K}-Einstein metrics and obtain the relation between Einstein-Weyl \cite{Or-Ve0} and quasi-Einstein Vaisman metrics \cite{Go-Va}.
Note that quasi-Einstein metrics arose naturally in the study of exact solutions of the Einstein field equations and highly relevant examples are provided by the Robertson-Walker spacetimes \cite{Ch}. In particular, quasi-Einstein Vaisman metrics were investigated by several authors and some relevant results concerning their geometry can be find in \cite{Dr2,Go-Va}. It is worth mentioning that quasi-Einstein metrics are abundant in LCK setting (see \eg \cite[page 65]{Dr1}).
In particular, one of the main subfamily of Vaisman manifolds, denoted by $\mathcal{P}_0K$ (see \cite{Va0} for details), consists of manifolds that naturally possess quasi-Einstein metrics. It is important to emphasize that the are various generalizations of quasi-Einstein metrics, such as mixed quasi-Einstein, nearly quasi-Einstein,  $m$-quasi-Einstein and generalized quasi-Einstein metrics, these being also of prime interest in some theories of modern physics (see \eg \cite{Br-Ga-Gi-Va,Ca,Ch,De-Sh,De,Fr-An-Te,Hu-Li-Zh}). It is known that examples of generalized quasi-Einstein metrics can be generated through conformal transformations and warped product extensions of some quasi-Einstein metrics.

The paper is organized as follows. In Section \ref{prelim} we outline the basic properties of Vaisman manifolds and the possibility to deform such manifolds using a differential basic function. In Section \ref{transverse_K_R_flow} we introduce the transverse {\K}-Ricci flow and prove the short time existence. 
In Section \ref{eta} we investigate quasi-Einstein Vaisman metrics and find their relationship with Einstein-Weyl structures.
Moreover, some illustrative examples are given along the paper.

\section{Preliminaries}\label{prelim}

\subsection{Vaisman manifolds}\label{Vaisman_manifolds}

In the following we present the definition and the main properties of Vaisman manifolds. For a more detailed exposure we indicate \cite{Dr-Or, Or-Ve1}.

Assume that $(M,J)$ is a complex manifold which is connected and closed, of real dimension $2n+2$, with $n\ge 1$. Let $g$ be a Hermitian metric on $M$. We define the fundamental 2-form $\omega(X,Y):=g(X,JY)$, with $X, Y \in \Gamma(TM)$. By abuse, we sometimes call ``metric'' the fundamental form $\omega$. We also denote by $\nabla^\omega$ the Levi-Civita connection of $\omega$.

\begin{definition}
	The manifold $(M,J,g)$ is called {\em Vaisman} if there exists a 1-form $\theta$, parallel with respect to $\nabla^\omega$, which satisfies the condition $d\omega=\theta\wedge\omega$.
\end{definition}	

\begin{remark}
The 1-form $\theta$ is closed, as it is parallel, and is called the {\em Lee form} \cite{Dr-Or}.
\end{remark}

\medskip

We can also normalize the Lee form such that  $\left\Vert \theta \right\Vert =1$. We denote by $\theta ^{c}:=\theta \circ J$ the {\em anti-Lee form}. Let $U=\theta ^{\sharp}$ and $V=\theta ^{c\sharp}$ denote the $g$-dual vector fields, thus

\[
\theta(U)=1,\quad \theta^c(V)=1,\quad \theta(V)=0,\quad \theta^c(U)=0.
\]

The fundamental 2-form is determined by the Lee form and the complex structure. We have
\begin{equation} \label{theta_theta_c}
		\omega=d \theta^c- \theta\wedge \theta^c,
\end{equation}
where $d$ is the de\thinspace Rham differential operator.
This above relation is due to I. Vaisman \cite{Va}.

\begin{remark}\label{K_covering}
The link between Sasaki and Vaisman structures is established by the fact that the universal Riemannian cover of a Vaisman manifold is a metric cone over a Sasaki manifold \cite{Or-Ve}. Thus, locally, on a Vaisman manifold we have a local Sasaki structure transverse to the flow determined by $U$.
\end{remark}

The fact that $\theta$ is parallel implies \cite{Va}:
\begin{equation}\label{conex_Vaisman}
\nabla^\omega _UU=\nabla^\omega _VU= \nabla^\omega _UV=\nabla^\omega _VV=0.
\end{equation}

\subsection{The foliated structure on a Vaisman manifold}\label{canonical_foliation}

In particular, $U$ and $V$ generate a foliation $\caf$ with leafwise dimension $2$ (called the \emph{canonical foliation} \cite{Ch-Pi, Ts}). From \eqref{conex_Vaisman}, we see that the leaves are minimal submanifolds.

As this foliated structure plays an important role in our further considerations, we describe next the main properties.

A metric defined on a foliated manifold is said to be bundle-like if locally it can be identified with a Riemannian submersion \cite{Re}. In the case of a Vaisman manifold, the metric falls in this category with respect to the canonical foliation \cite{Va}.

Consider the foliated distribution $\caf$ generated by the vector fields $U$ and $V$. If  $Q$ is the transverse orthogonal complement of $\caf$ with respect to $g$, then we have a canonical splitting of the tangent bundle

\begin{equation}\label{split_tang}
TM=Q\oplus T\caf.
\end{equation}
 This also imply a splitting of the metric
\[
g=g^T\oplus g^{T\caf},
\]
where $g^T$ is the transverse metric, $g^{T\caf}$ being the leafwise metric. The later is in fact Euclidean \cite{Va}, while the transverse metric, as in the case of a Riemannian submersion, can be locally projected on a local {\em transversal}. Here, by a local transversal we mean a local submanifold of dimension $2n$, transverse to the leaves.

Other geometric objects that can be projected locally on a local transversal are the {basic differential forms}.

\begin{definition}
The de\thinspace Rham complex of the basic (projectable) differential
forms is defined as

\[
\Omega _b\left( M\right) :=\left\{ \alpha \in \Omega \left(
M\right) \mid \iota _X\alpha =0,\,\,\mathrm{Lie}_X\alpha =0\mbox{\,\, for any\,\,}X\in \Gamma \left(T\caf \right) \right\} .
\]
By $\mathrm{Lie}_X$ we denote the Lie derivative along $X$, while $\iota $ stands for interior product.
The \emph{basic de\thinspace Rham derivative} is defined as $d_b:=d_{\mid \Omega _b\left( M\right)}$
(see \emph{e.g.}  \cite{Ton}).
\end{definition}

On a Vaisman manifold the complex structure $J$ is also projectable, as well as the transverse fundamental two form $\omega^T$ (which is thus a basic form, according to the above definition). The canonical foliation is endowed with a transverse {\K} structure \cite{Va}.
Thus we can consider a local orthonormal frame $\{e_{1},\ldots,e_n,e_{n+1,}\ldots,e_{2n},U,V \}$, such that

\[
J(U)=V, \qquad J(V)=-U, \quad J(e_i)=e_{n+i}, \quad J(e_{n+i})=-e_i.
\]

\begin{definition}
A convenient connection for the transverse geometry of the foliation is represented by the {\em Bott connection} $\nabla^T$ (see \cite[Chapter 3]{Ton}) defined by
\[
\begin{cases}
\nabla^T_X Y:=\pi_Q(\nabla^T_X Y)\,\,\mbox{if}\,\, X\in \Gamma(Q),\\
\nabla^T_X Y:=\pi_Q([X, Y])\,\,\mbox{if}\,\, X\in \Gamma(T\caf).
\end{cases}
\]
\end{definition}
\begin{remark}\label{curvature}
It is well known that the restriction of the above connection
\[
\nabla^T:\Gamma(Q)\times \Gamma(Q)\rightarrow \Gamma(Q)
\]
as well as the \emph{transverse Riemann curvature tensor filed}
\[
R^T: \Gamma(Q)\times\Gamma(Q)\times\Gamma(Q)\times\Gamma(Q)\rightarrow \Omega_b^1(M)
\]
are projectable on a local transversal \cite[Chapter 3]{Ton}.
\end{remark}

The transverse Ricci tensor is defined using the transverse Riemann curvature tensor and the above local orthonormal frame.
\begin{definition}
For any $X, Y \in \Gamma(Q)$, the \emph{transverse Ricci tensor filed} is defined as
\[
\Ric^T(X,Y)=\sum_{i=1}^{2n} R^T(e_i,X,Y,e_i).
\]
The \emph{transverse Ricci operator} $\Ric^T:\Gamma(Q)\rightarrow \Gamma(Q)$ is defined such that
\[
\Ric^T(X,Y)=g^T(\Ric^T(X),Y).
\]
\end{definition}

\begin{remark}
The transverse Ricci operator defined above can be naturally extended on the complex of basic differential forms, obtaining an operator $\Ric^T: \Omega _b\left( M\right)\rightarrow \Omega _b\left( M\right)$.
\end{remark}
In the next subsection we describe more explicitly the above projectable mathematical objects using foliated coordinates.

\subsection{Vaisman structure described in a foliated chart}\label{foliated_map}
We consider  local foliated charts of the form $\left(\mathcal{U}, z^j, \bar{z}^j,x,y\right)$, $\mathcal{U}$ being homeomorphic to the product $\mathcal{T}\times \mathcal{O}$, where for the local transversal $\mathcal{T}$ we can be considered a local chart $\left(\mathcal{T}, z^j, \bar{z}^j\right)$ on a complex {\K} manifold, while $(\mathcal{O},x,y)$ is a local chart on the Euclidean space $\RR^2$.
$z^j$, $\bar{z}^j$ are the transverse complex coordinates, $z^j=x^j+\ii y^j$, $\bar{z}^j=x^j-\ii y^j$  while $x,y$ are the leafwise real coordinates. More precisely, we choose the coordinates such that

\[
\frac \6 {\6x}=U,\quad \frac \6 {\6y}=V,
\]
where $U$ is the Lee vector field, $V$ being the anti-Lee vector field.

\begin{remark}\label{basic_vector}
Considering the complex version of the basic de\thinspace Rham complex $\Omega_b(M)$, we see that the differential forms $dz^j$, $d\bar z^j$ are basic. The dual vector fields $\frac \6 {\6z^j}$, $\frac \6 {\6\bar z^j}$ are \emph{basic vector fields}. A defining relation for a basic vector field $X\in \Gamma (Q)$ is $[X, Y]\in \Gamma (Q)$, for any leafwise vector field $Y \in \Gamma(T\caf)$ (see \eg \cite[Chapter 1]{Mo}).
\end{remark}

As a Vaisman manifold has a transverse {\K} structure, we consider $h$ a local transverse {\K} potential, and describe the local structure of a Vaisman manifold using $h$. Note that the local function $h$ is basic. First we express the complex structure $J$ using the above local foliated coordinates.

In the local map, the standard complex structure $J_0$ is defined by the relations
\begin{equation*}
J_0\left( \frac\partial{\partial x^j} \right)=\frac\partial{\partial y^j}, \quad
J_0\left( \frac\partial{\partial y^j} \right)=-\frac\partial{\partial x^j},\quad
J_0\left( \frac\partial{\partial x} \right)=\frac\partial{\partial y}, \quad
J_0\left( \frac\partial{\partial y} \right)=-\frac\partial{\partial x}.
\end{equation*}
We emphasized in the \ref{K_covering}, that a Vaisman manifold can be described locally as a Riemannian submersion with the fibers having a local structure of a Sasaki manifold (see also \cite{Or-Ve}). For the local transverse {\K} potential  in the case of a Sasaki manifold we refer to \cite{Go-Ko-Nu, Sm-Wa-Zh}. Thus the complex structure $J$ is expressed as
\[
\begin{split}
  J&=J_0+\frac\partial{\partial x}\otimes d^c \left(h\right)+\frac\partial{\partial y}\otimes d^c \left(h\right)\circ J_0 \\
   &=J_0+\frac\partial{\partial x}\otimes (\ii \frac{\6 h}{\6 \bar{z}^j}d\bar{z}^j-\ii \frac{\6 h}{\6 z^j}dz^j)+\frac\partial{\partial y}\otimes (\ii \frac{\6 h}{\6 \bar{z}^j}d\bar{z}^j-\ii \frac{\6 h}{\6 z^j}dz^j)\circ J_0.
\end{split}
\]

The Lee and anti-Lee forms are
\[
\theta=dx, \quad \theta^c =-\theta \circ J=dx+ \ii \frac{\6 h}{\6z^j}dz^j-\ii \frac{\6 h}{\6\bar{z}^j}d\bar{z}^j.
\]

We introduce the basis of local vector fields $\{X_1, \ldots  X_n, \bar{X}_1, \ldots  \bar{X}_n\}$ on $Q$, where
\begin{equation}\label{X_j}
 X_j =\frac{\6}{\6 z^j}-\ii \frac{\6 h}{\6 z^j}\frac{\6}{\6 x}, \quad
 \bar{X}_j =\frac{\6}{\6 z^j}+\ii \frac{\6 h}{\6 \bar{z}^j}\frac{\6}{\6 x}.
\end{equation}
We can check by a direct computation that
\begin{equation}\label{J_X}
\begin{split}
J(X_j)=\ii X_j,\quad J(\bar X_j)=-\ii \bar X_j.
\end{split}
\end{equation}

The dual of the basis $\{X_i,\bar{X}_i, U, V\}$ is in fact $\{dz^i, d\bar{z}^i, \theta, \theta^c\}$. Notice that the local 1-forms $dz^i, d\bar{z}^i$ are basic, and thus projectable.

We also denote
\[
g(X_j, \bar{X}_k)=g^T_{j\bar{k}}.
\]

In local coordinates $g^T$ is written as below.

\begin{equation*}
    g^T=g_{j\bar k}dz^j \odot d\bar z^k:=g_{j\bar k}(dz^j \otimes d\bar z^k
    +d\bar z^k \otimes dz^j).
\end{equation*}

The associated  transverse fundamental two-form $\omega^T$ is written in the following manner.

\begin{equation*}
    \omega^T=\ii g_{j\bar k}dz^j \wedge \bar z^k.
\end{equation*}

We compute now the coefficients of the Bott connection in the local foliated coordinates. First, the following Lie brackets vanish

\begin{equation*}
    [X_j,X_k]=[\bar X_j,\bar X_k]=[U,X_j]=[U,\bar X_j]=0.
\end{equation*}
For the anti Lee vector filed $V$ we get a similar result. Using this result, similar to the {\K} case, we see that the only Christoffel coefficients of $\nabla^T$ that do not vanish are $\Gamma^k_{jl}$ and $\Gamma^{\bar k}_{\bar j \bar l}$ (see also \cite[Section 4]{Sm-Wa-Zh}). By a direct computation we get

\begin{equation*}
    \Gamma^k_{jl}=(g^T)^{k\bar m}\frac{\6g^T_{l\bar m}}{\6 z^j},
\end{equation*}
$(g^T)^{k\bar m}$ being the entries of the inverse of the matrix $g^T_{j\bar k}$. As both Sasaki manifolds and Vaisman manifolds have a transverse {\K} structure, the results and computations are similar to \cite[Section 4]{Sm-Wa-Zh} (see also \cite{Sl-Vi-Vil}).

The Bott connection $\nabla^T$ defined in the previous part can be naturally extended on basic differential forms, so the differential operators $\6_b:\Omega_b(M)\rightarrow \Omega_b(M)$ and $\bar \6_b:\Omega_b(M)\rightarrow \Omega_b(M)$ can be defined as
\begin{equation}\label{del_b}
    \6_b=\sum_{i=1}^ndz^i \wedge \nabla^T_{X_i}, \quad
    \bar{\6}_b=\sum_{i=1}^nd\bar{z}^i \wedge \nabla^T_{\bar{X}_i}, \\
\end{equation}

\begin{remark}\label{del_b_nonbasic}
In the next section we apply the above derivatives on smooth functions which are not necessarily basic. If $f $ varies along the leaves, then using formulas \eqref{del_b}, we can naturally extend the operators $\6_b$, $\bar \6_b$ on $f$. Thus $\ii \6_b\bar\6_b f$ is a real differential 2-form which vanishes when applied on the leafwise distribution $T\caf$ but it is not necessarily basic (\ie invariant along leaves).
\end{remark}

We pass to the transverse Ricci operator. Assume that

\begin{equation*}\label{oper_Ricci}
    \Ric^T(\omega^T)=R_{j\bar k} dz^j \wedge d\bar z^k.
\end{equation*}
Then, as in the case of {\K} manifolds (see \eg \cite[Section 2.1]{To}), we obtain

\begin{equation}\label{coef_Ricci}
    R^T_{j\bar k}=-\frac{\6^2}{\6z^j \6 \bar z^{k}}\log \det (g^T_{j \bar k}).
\end{equation}

From \eqref{oper_Ricci} and \eqref{coef_Ricci}, if $\omega^T$ and $\tilde \omega^T$ are two different transverse fundamental forms, then
\begin{equation}\label{Ric_Chern}
    \Ric^T(\omega^T)-\Ric^T(\tilde\omega^T)=\ii \6_b \bar \6_b \log \frac{\det \tilde g}{\det g}.
\end{equation}

Consider now the following class of the basic Dolbeaut cohomology w.r.t. the complex structure of the manifold.
\begin{equation*}
    \begin{split}
    H_b^{1,1}(M)&:=\frac{\{\bar \6_b\--\mbox{closed real (1,1) basic forms}\}}{\Im \bar \6_b} \\
    &=\frac{\{\bar \6_b\--\mbox{closed real (1,1) basic forms}\}}{\Im \6_b\bar \6_b}.
    \end{split}
\end{equation*}
The last equality is based on the basic $\6\bar \6$\--Lemma proved in \cite[Proposition 3.5.1]{El Ka}.

As the function in \eqref{Ric_Chern} is globally defined and in a local map has the value $\log \frac{\det (\tilde g_{j \bar k})}{\det (g_{j \bar k})}$, we see that the basic Chern class $c_b^1(M)\in H_b^{1,1}(M)$ defined as
\begin{equation*}
    c_b^1(M):=[\Ric^T(\omega^T)]
\end{equation*}
does not depend on the metric (see also \cite{El Ka}).

Similar to \cite{Sm-Wa-Zh}, for a smooth function (not necessarily basic) we denote
\begin{equation}\label{derivate}
    \begin{split}
    f_{\mid j}&:=X_j(f),\quad f_{\mid \bar{j}}:=\bar{X}_j(f), \\
     f_{\mid j \bar{k}}&:=\nabla^g df (X_j,\bar{X}_k)=X_j df(\bar{X}_k)-df (\nabla^g_{X_j}\bar{X}_k).
    \end{split}
\end{equation}
where $\nabla^g df$ is the Hesse tensor field associated to the function $f$, $\nabla^g$ being the Levi-Civita linear connection of the initial metric $g\equiv g_0$.

\begin{remark}\label{leading_symbol}
If we change the metric, from \eqref{derivate} we see that the Hesse tensor field differs by an differential operator of order at most 1.
\end{remark}

\begin{remark}\label{omega_basic_fct}
Notice that if $f\in \Omega^1_b (M)$, \ie $f$ is a basic function, then $df$ is a basic 1-form, hence
\[
f_{\mid j}=f_{j},\quad f_{\mid j \bar{k}}=f_{j \bar{k}}.
\]
\end{remark}

\begin{remark}\label{corespond}
In the above considered local foliated map $\mathcal{U}$, which is homeomorphic to the product $\mathcal{T}\times \mathcal{O}$, any local basic smooth function on $\mathcal{U}$ projects on a smooth function on $\mathcal{T}$. For instance, the local coefficients $g^T_{j\bar{k}}$ of the transverse metric project on the local coefficients $g^{\mathcal{T}}_{j\bar{k}}$ of the {\K} metric on $\mathcal{T}$. The same holds for basic 1-forms and 2-forms, as well for (local) basic vector fields. We saw in \ref{curvature} that the transverse Bott connection and the associated Riemannian curvature tensor can be projected on the local transversal. Thus locally, the transverse Ricci tensor is also projectable on the Ricci tensor on the transversal $\cat$.

As a Riemannian foliation is locally described by a Riemannian submersion, the correspondence is in fact one to one, and any geometric object can be lifted on a (local) basic counterpart (see \cite{Ton} for more details).
\end{remark}

\subsection{Deformations of a Vaisman structure}\label{deformations}
We present in this subsection two deformations of a Vaisman structure that we use in the rest of the paper.
\subsubsection{Deformation of a Vaisman structure using a differential basic function}\label{deform}
In the recent paper \cite{Or-Sl}, a method to deform the structure of a Vaisman manifold using a family of basic functions $\varphi(t)$ is described. More precisely, a family of Vaisman structure $(M, J(t), g(t))$ with $J(0)=J$ and $g(0)=g$ for $t$ small enough is constructed, fixing the canonical foliations and deforming only the complex structure and the metric. The deformation of the complex structure is
\begin{equation}
	J(t)=J-U\otimes d \varphi(t)-V\otimes (d^c_b \varphi(t)\circ J).  \label{J_t}
\end{equation}
The family of anti-Lee 1-forms is
\begin{equation}
 \theta ^{c}(t)=\theta \circ J(t)=\theta ^{c}+d^c_b\varphi(t),
\end{equation}
while the metric is deformed as
\begin{equation}\label{g_t1}
\omega(t)=d\theta ^{c} +d_b d^c_b\varphi(t)-\theta\wedge\theta^c(t)  .
\end{equation}
The transverse metric is deformed in the following way
\begin{equation*}
    \omega^T(t)=\ii(g^T_{j\bar k}+ \varphi(t)_{{j\bar k }})dz^j \wedge d\bar z^k.
\end{equation*}

Thus the basic cohomology group $[\omega^T(t)]$ remains fixed. As a consequence, in this paper we use this method to construct a Vaisman metric for which the transverse part is the solution of the transverse {\K}-Ricci flow (for definition see Section  \ref{transverse_K_R_flow} below).

\subsubsection{Q-homothetic deformations}\label{deform_homot}
We multiply the Lee form $\theta$ by a constant $a>0$, fixing the complex structure $J$ and obtain a new Vaisman metric (this type of deformation is called a {\em 0-type deformation }\cite[Section2]{Be}).
\begin{equation*}
\begin{split}
\tilde g=d(a\theta)+(a\theta^c)\otimes(a\theta^c)+(a\theta)\otimes(a\theta)=ag+(a^2-a)(\theta^c\otimes\theta^c+\theta\otimes\theta).
\end{split}
\end{equation*}

It is easy to see that a homothetic transformation is applied on the transverse metric. A similar transformation for Sasaki manifolds was introduced by Tanno \cite{Tan} (see also \cite[Section 3]{Bo-Ga-Ma}).

We use this transformation in the particular case when the Vaisman metric is transversally {\K}-Einstein (see Section \ref{eta}).

\subsection{Einstein-Weyl manifolds}\label{Einstein-Weyl}

In the final part of preliminary section we briefly recall the notion of Einstein-Weyl manifolds. As the Einstein condition is not invariant with respect to a conformal transformation of the metric, for Vaisman manifolds (and more generally for LCK manifolds) we are interested in finding metric that satisfy the Einstein-Weyl condition.

\begin{definition}
A Riemannian manifold is called \emph{Einstein-Weyl manifold} if it is conformally equivalent to an Einstein manifold.
\end{definition}

We mentioned before that for a Vaisman manifold $M$ the Riemannian universal covering $\tilde M$ is a metric cone over a Sasaki manifold. This covering has a {\K} metric denoted by $\tilde g$. The Levi-Civita connection $D$ on the covering can be projected on the manifold $M$, obtaining a new connection, denoted also by $D$. The link between the connections $\nabla$ and $D$ is expressed by the following relation \cite{Dr-Or}.
\[
D_X Y=\nabla_X Y -\frac 1 2 (\theta(X)Y +\theta(Y)X -g(X,Y)U).
\]

Thus, if on $\tilde g$ we have the relation $\Ric^D=\lambda \tilde g$, where $\Ric^D$ is the Ricci tensor associated to $D$ and $\lambda \in \RR$, then the manifold $M$ is Einstein-Weyl. We say that the pairing $(g,\theta)$ is an \emph{Einstein-Weyl structure}.

\section{The transverse {\K}-Ricci flow on a Vaisman manifold}\label{transverse_K_R_flow}
\subsection{The transverse {\K}-Ricci and Monge-Amp\`ere equations}
Using the foliated structure induced on a Vaisman manifold by the canonical foliation, we present in this subsection the transverse {\K}-Ricci equation and the equivalent Monge-Amp\`ere equation.
\begin{definition}
Let $(M,J,g)$ be a Vaisman manifold and $\omega_0^T$ the transverse K\"ahler form with respect to the canonical foliation. The equation
\begin{equation}
\begin{cases}\label{transv_Ricci_eq}
\frac {\6}{\6 t}\omega^T(t)=-\mathrm{Ric^T}(\omega^T(t)), \\[.1in]
\omega^T(0)=\omega^T_0.
\end{cases}
\end{equation}
is called \emph{the transverse {\K}-Ricci equation}.
\end{definition}

From \eqref{transv_Ricci_eq} we obtain the equation for the cohomology classes.

\begin{equation}\label{eq_classes}
\begin{cases}
\frac {\6}{\6 t}[\omega^T(t)]=-c_1^b(M). \\[.1in]
[\omega^T(0)]=[\omega^T_0].
\end{cases}
\end{equation}
with the solution $[\omega^T(t)]=[\omega^T_0]-t c_1^b(M)$.
Denote by $\cac_M^b$ the transverse {\K} cone:
\[
\cac_M^b :=\{[\alpha]\in H^{1,1}_b (M) \mid \,\mbox{there exists a transverse {\K} form in}\,[\alpha] \}.
\]
As in the K\"ahler case (see \cite[Section 2.1]{To}), $\cac_M^b$ is an open set.
We construct the corresponding Monge-Amp\`ere equation. Let
\[
T_{max}:=\mathrm{sup} \{t>0 \mid [\omega^T_0]-t c_1^b(M)\in \cac_M^b\}.
\]
For $s<T_{max}$ we have  $[\omega^T_0]-s c_1^b(M)\in \cac_M^b$. Let then $\hat{\omega}^T(s) \in [\omega^T_0]-s c_1^b(M)$ be a {\K} form and define
\[
\chi:=\frac 1 {s} \left(\hat{\omega}^T(s)-\omega^T_0\right).
\]
Notice that $\chi \in \cac_M^b$. We define now the basic forms
\[
\hat{\omega}^T(t):=\hat{\omega}^T_0+t \chi, \qquad t\in [0,T_{max}).
\]
As $[\omega^T(t)]=[\hat{\omega}^T(t)]$ for any $t$, $\hat\omega^T(t)$ represents the basic cohomology class of $\omega^T(t)$. Thus there exist a family of smooth basic functions $\{\varphi(t)\}_t$ such that

\begin{equation} \label{omega_varphi}
\omega^T(t)=\hat{\omega}^T(t)+\ii \6_b\bar{\6}_b\varphi(t).
\end{equation}

Let us consider in the following a transverse volume form $\Omega$.
\begin{remark}
Note than since the foliation $\caf$ is totally geodesic, in particular minimal, $\caf$ is transversally orientable and global transversal volume forms can be defined \cite{Ton}.
\end{remark}

On a local map $\mathcal{U}$ with transverse coordinates $z^j,\bar z^j$ there is a local smooth basic function $f$ such that
$$\Omega=f\cdot \ii^n dz^1\wedge d\bar z^1 \wedge\dots\wedge dz^n\wedge d\bar z^n.$$
We define the local two-form $\ii\6_b\bar\6_b \log\Omega:=\ii\6_b\bar\6_b \log f$.
The next result can be proved as in the setting of {\K} manifolds \cite[Subsection 1.4]{We}.

\begin{claim}
$\ii\6_b\bar\6_b \log\Omega$ does not depend on the local foliated map and can be globally defined.
\end{claim}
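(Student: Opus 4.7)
The plan is to verify that the locally defined forms agree on overlaps of foliated charts, mimicking the analogous \K{} argument while exploiting the one-to-one correspondence between basic objects on $M$ and their projections to local transversals (as recalled in Remark \ref{corespond}).

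Concretely, I would pick two overlapping foliated charts $(\mathcal{U}, z^j, x, y)$ and $(\mathcal{U}', w^k, x', y')$. Because the transverse complex structure is a globally well-defined projectable object on $M$, the transition between the transverse coordinates descends to a holomorphic change of coordinates $w^k = w^k(z^1,\ldots,z^n)$ between the corresponding local transversals; consequently the Jacobian determinant $\psi := \det(\partial w^k/\partial z^j)$ is a holomorphic basic function on $\mathcal{U}\cap \mathcal{U}'$, nowhere zero after shrinking the charts if necessary. Writing $\Omega$ in each chart gives
\[
\Omega = f\cdot \ii^n dz^1\wedge d\bar z^1\wedge\cdots\wedge dz^n\wedge d\bar z^n = g\cdot \ii^n dw^1\wedge d\bar w^1\wedge\cdots\wedge dw^n\wedge d\bar w^n,
\]
and the change-of-variables formula yields $f = g\cdot |\psi|^2$, so that $\log f = \log g + \log \psi + \log \bar\psi$ (on a neighborhood where a branch of $\log\psi$ is defined).

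Since $\psi$ is holomorphic and basic, the definition of $\bar\6_b$ in \eqref{del_b} immediately gives $\bar\6_b \log \psi = 0$, and by conjugation $\6_b \log\bar\psi = 0$. Therefore $\ii\6_b\bar\6_b(\log \psi + \log\bar\psi) = 0$, so $\ii\6_b\bar\6_b \log f$ and $\ii\6_b\bar\6_b\log g$ coincide on the overlap. Patching the local real $(1,1)$ basic forms then yields the desired globally defined form. The only (mild) obstacle is recognizing that the transverse part of the overlap transition is genuinely holomorphic; once this is granted via the projectability of $J$, the argument is a direct transcription of the standard \K{} computation (compare \cite[Subsection 1.4]{We}).
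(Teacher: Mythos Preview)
Your proposal is correct and follows exactly the route the paper indicates: the paper does not spell out a proof but simply states that the result ``can be proved as in the setting of {\K} manifolds \cite[Subsection 1.4]{We}'', and what you have written is precisely that standard {\K} computation transplanted to the foliated setting via the projectability of the complex structure. In other words, you have filled in the details the paper chose to omit, using the same approach.
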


We consider also the globally defined two-form $\Ric^T\Omega:=-\ii\6_b\bar\6_b \log f$.

Fix now a transverse volume form $\Omega$  such that $\ii \6_b\bar{\6}_b\log \Omega=\chi$.

\medskip

\begin{remark}
A transverse volume form with $\ii \6_b\bar{\6}_b\log \Omega=\chi$ can be constructed as follows. If  $\omega^T_0$ is the  initial transverse {\K} form, consider  the transverse volume form $\Omega_0:=(\omega^T_0)^n$. Since  both $\ii \6_b\bar{\6}_b\log \Omega_0=\ii \6_b\bar{\6}_b\log (\det g^T_{j\bar{k}})$ and $\chi$ represent  $-c_1^b[M]$,  \cite[Proposition 3.5.1]{El Ka} implies the existence of a basic smooth function $F$ such that
\[
\chi-\ii \6_b\bar{\6}_b\log \Omega_0=\ii \6_b\bar{\6}_b F.
\]
Define the transverse volume form $\Omega:=e^F \Omega_0$ (see also \cite{To}).
\end{remark}
\medskip

Adapting the proof from the case of {\K} manifolds (see \eg \cite[Section 3.2]{To}) to our context, we get that the transverse {\K}-Ricci equation \eqref{transv_Ricci_eq} is equivalent to a Monge-A\`mpere equation in the following manner.

\begin{proposition}\label{eq_Ricci_eq_Monge_equiv}
A smooth family of transverse {\K} metrics $\omega^T(t)$ is a solution of equation \eqref{transv_Ricci_eq} for $ t\in [0,\epsilon) $ if and only if there is a smooth family of smooth basic functions $\varphi(t)$,  which satisfies \eqref{omega_varphi} and also the  associated Monge-Amp\`ere equation
\begin{equation}\label{Monge_Ampere}
\begin{cases}
\frac {\6\varphi} {\6t}(t)=\log \frac {(\hat{\omega}^T(t)+\ii \6_b\bar{\6}_b\varphi(t))^n}{\Omega}, \\[.1in]
\varphi(0)=0.
\end{cases}
\end{equation}
\end{proposition}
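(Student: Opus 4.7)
I would follow the classical reduction of the {\K}-Ricci flow to a scalar Monge-Amp\`ere equation, transported into the basic setting using the basic $\6_b\bar\6_b$-lemma of \cite[Proposition 3.5.1]{El Ka}. Both implications must be verified.

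For the direction $(\Rightarrow)$, suppose $\omega^T(t)$ solves \eqref{transv_Ricci_eq}. By \eqref{eq_classes} we have $[\omega^T(t)]=[\hat\omega^T(t)]$ in $H^{1,1}_b(M)$ for every $t$, so a parametric application of the basic $\6_b\bar\6_b$-lemma produces a smooth family of basic functions $\psi(t)$ with
\[
\omega^T(t)=\hat\omega^T(t)+\ii\6_b\bar\6_b\psi(t),
\]
and after subtracting a constant we may arrange $\psi(0)=0$ (since $\ii\6_b\bar\6_b\psi(0)=\omega^T_0-\hat\omega^T(0)=0$ forces $\psi(0)$ to be basic pluriharmonic, hence constant). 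Differentiating in $t$ yields $\frac{\6}{\6 t}\omega^T(t)=\chi+\ii\6_b\bar\6_b\dot\psi(t)$. On the other hand, the local formula \eqref{coef_Ricci}, together with the definitions of $\Ric^T\Omega$ and $\ii\6_b\bar\6_b\log\Omega$ from Section \ref{transverse_K_R_flow}, globalizes to
\[
-\Ric^T(\omega^T(t))=\ii\6_b\bar\6_b\log\Omega+\ii\6_b\bar\6_b\log\frac{(\omega^T(t))^n}{\Omega}.
\]
Using $\chi=\ii\6_b\bar\6_b\log\Omega$ and equating with the time derivative from the flow, we obtain
\[
\ii\6_b\bar\6_b\!\left(\dot\psi(t)-\log\frac{(\omega^T(t))^n}{\Omega}\right)=0.
\]
Since basic pluriharmonic functions on the compact transversely {\K} foliated manifold $(M,\caf)$ are constant (by the maximum principle for the basic Laplacian, available because $\caf$ is minimal and hence taut), the expression in parentheses depends only on $t$, say equals $c(t)$. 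Setting $\varphi(t):=\psi(t)-\int_0^t c(s)\,ds$ produces a smooth family of basic functions satisfying \eqref{omega_varphi}, the Monge-Amp\`ere equation \eqref{Monge_Ampere}, and the initial condition $\varphi(0)=0$.

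For the direction $(\Leftarrow)$, given $\varphi(t)$ solving \eqref{Monge_Ampere}, define $\omega^T(t):=\hat\omega^T(t)+\ii\6_b\bar\6_b\varphi(t)$. Differentiating in $t$ and substituting the right-hand side of \eqref{Monge_Ampere} for $\dot\varphi$, one gets
\[
\frac{\6}{\6 t}\omega^T(t)=\chi+\ii\6_b\bar\6_b\log\frac{(\omega^T(t))^n}{\Omega},
\]
which, by $\chi=\ii\6_b\bar\6_b\log\Omega$ and the same globalized form of \eqref{coef_Ricci} as above, is precisely $-\Ric^T(\omega^T(t))$. The condition $\varphi(0)=0$ then yields $\omega^T(0)=\hat\omega^T(0)=\omega^T_0$.

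\textbf{Main obstacle.} The principal technical point is the parametric version of the basic $\6_b\bar\6_b$-lemma, \ie producing $\psi(t)$ as a family depending smoothly on $t$. This is handled via basic Hodge theory for the canonical foliation (available since $\caf$ is transversely {\K} and taut), by writing $\psi(t)$ as a fixed Green-operator expression applied to the smoothly varying input $\omega^T(t)-\hat\omega^T(t)$; smooth $t$-dependence then follows from smoothness of the input. Once this is in place and the vanishing of basic pluriharmonic functions modulo constants is invoked, the remaining steps are formal manipulations built on the local Ricci formula \eqref{coef_Ricci} and our normalization $\chi=\ii\6_b\bar\6_b\log\Omega$.
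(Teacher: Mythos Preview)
Your proposal is correct and is exactly the classical reduction that the paper intends: the paper does not write out a proof but simply says ``Adapting the proof from the case of {\K} manifolds (see \eg \cite[Section 3.2]{To}) to our context,'' and your argument is precisely that adaptation. You have in fact supplied more detail than the paper, in particular by flagging the smooth $t$-dependence of the potential coming from the basic $\6_b\bar\6_b$-lemma and indicating how to obtain it via the basic Green operator.
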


Notice that  \eqref{Monge_Ampere} can be rewritten as
\begin{equation}\label{M_A}
\begin{cases}
  \frac {\6\varphi} {\6t}(t)=\log \frac {(\hat{\omega}^T(t)+\ii \6_b\bar{\6}_b\varphi(t))^n}{\Omega_0}-F, \\[.1in]
  \varphi(0)=0.
\end{cases}
\end{equation}

\medskip

Let $\omega^T_{\mathcal{U}}$ be the restriction of the transverse {\K} form $\omega^T$ to $\mathcal{U}$, where  $(\mathcal{U}, z_j, \bar z_j, x, y)$ is a foliated chart (Subsection \ref{foliated_map}). Then
\begin{equation*}
    \omega^T_{\mathcal{U}}=\ii (\hat g^T_{j\bar{k}}+\varphi_{j \bar{k}}) dz^j \wedge d\bar{z}^k.
\end{equation*}

In foliated coordinates, the Monge-Amp\`ere equation \eqref{Monge_Ampere} becomes
\begin{equation}\label{M_A_local}
\begin{cases}
\frac {\6 \varphi}{\6 t}(t)=\log \frac{\left(\det(\hat g^T_{j\bar{k}}(t)+\varphi_{j \bar{k}}(t))\right)}
{\left(\det g^T_{j\bar{k}}\right)}+F, \\
\varphi(0)=0,
\end{cases}
\end{equation}
where $\hat g^T_{j\bar{k}}(t)$ are the local coefficients of $\hat{\omega}^T(t)$ (as such, they are basic functions).

\subsection{Short time existence for transverse {\K}-Ricci flow on a Vaisman manifold}\label{short_time_existence}
We present now the method used to prove the existence and uniqueness of the solution of equation \eqref{M_A} (which is equivalent to \eqref{Monge_Ampere}). The equation is only transversally parabolic.
It is important to remark that in order to keep $\omega^T(t)$ invariant along the leaves (thus corresponding to a bundle-like metric) the solution $\varphi(t)$ should be a basic function. As in \cite{Sm-Wa-Zh}, we construct an additional equation which is parabolic also in the leafwise directions and which coincides with \eqref{M_A} when $\varphi(t)$ is basic. This is in fact a frequent technique employed when investigating the transverse geometry of a Riemannian foliation (see \eg the extension of the basic Laplace operator in \cite[Chapter 7]{Ton}). We obtain the existence and uniqueness of the solution using the theory of parabolic equations (see \eg \cite[Chapter 8]{Kr}).
Finally, using maximum principle \cite[Theorem 8.1.2]{Kr}  we prove that the solution $\varphi(t)$ is actually a basic function.

To the family of {\K} forms $\omega^T(t)$ we associate a family of forms $\tilde \omega^T(t)$  which also vanish when applied on the leafwise distribution $T\caf$, but are not necessarily basic. We define the forms in the following manner.
\begin{definition}
For $\varphi(t) \in C^\infty (M)$, denote by $\tilde{\omega}^T(t)$ the differential 2-forms
\[
\begin{split}
\tilde{\omega}^T(X,Y)&:=\hat \omega^T(t)(X,Y) +\ii \6_b \bar{\6}_b \varphi(t)(\pi_Q(X),\pi_Q(Y)) \\
 &=\hat \omega^T(t)(X,Y) +\frac 1 2 \left(\nabla^\omega d\varphi(t)f (J\circ\pi_Q(X),\pi_Q(Y))\right. \\
 &\left.-\nabla^\omega d\varphi(t) (\pi_Q(X),J\circ\pi_Q(Y)) \right).
\end{split}
\]
\end{definition}
for any vector fields $X, Y \in \Gamma(TM)$.

From  \ref{del_b_nonbasic}, $\tilde{\omega}^T$ vanishes when applied to leafwise vector fields. Using \eqref{J_X} and \eqref{derivate}, in the above local foliated chart $\mathcal{U}$,  $\tilde \omega^T$ satisfies

\begin{equation*}
    \begin{split}
    &\tilde \omega^T(X,Y)=-\tilde \omega^T(Y,X), \\
    &\tilde \omega^T(X_j,X_ k)=
    \tilde \omega^T(X_{\bar j},X_{\bar k})=0,\\
    &\tilde \omega^T(X_j,X_{\bar k})=\hat g^T_{j \bar{k}}
    +\varphi_{\mid j \bar{k}}.
    \end{split}
\end{equation*}

Thus the local form of $\tilde \omega^T$ is

\begin{equation}\label{nonumber}
    \tilde{\omega}^T=\ii \left( \hat g^T_{j \bar{k}}
    +\varphi_{\mid j \bar{k}} \right)dz^j\wedge d\bar{z}^k.
\end{equation}

\begin{remark}\label{omega_tilde_omega}
According to \ref{omega_basic_fct}, if $\varphi(t)$ are basic functions, then $\tilde{\omega}^T\equiv \omega^T$.
\end{remark}

We write the new Monge-Amp\`ere equation using the transverse {\K} forms $\tilde{\omega}^T(t)$. We add the corresponding derivates along the leaves, according to the method described above. As the leafwise metric is Euclidean, we obtain the following equation.

\begin{equation}\label{M_A_extended}
\begin{cases}
\frac {\6 \varphi}{\6 t}(t)=\log \frac{\left((\tilde{\omega}^T(t))^n\wedge\theta\wedge\theta^c(t)\right)}
{\left(({\omega}_0^T)^n\wedge\theta\wedge\theta^c\right)} +\frac 1 2 U(U(\varphi(t)))+\frac 1 2 V(V(\varphi(t)))+F, \\
\varphi(0)=0.
\end{cases}
\end{equation}

\begin{claim}\label{M_A_M_A_extended}
If $\varphi$ is a basic function, the equations \eqref{M_A_extended} and \eqref{M_A} coincide.
\end{claim}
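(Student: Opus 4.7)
The plan is to expand both sides of \eqref{M_A_extended} and \eqref{M_A} in a foliated chart and verify, under basicness of $\varphi(t)$, that three simplifications collapse the extended equation onto the original one: the corrective leafwise Hessian terms vanish, $\tilde\omega^T(t)$ reduces to $\omega^T(t)$, and the time-dependent correction to $\theta^c$ is killed inside the wedge product.

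First I would dispose of the leafwise corrections. A basic function $\varphi(t)$ satisfies $\mathrm{Lie}_X\varphi(t)=0$ for every $X\in\Gamma(T\caf)$, so in particular $U(\varphi(t))=V(\varphi(t))=0$, and the iterated derivatives $U(U(\varphi(t)))$ and $V(V(\varphi(t)))$ vanish pointwise, removing the last two terms added in \eqref{M_A_extended}. Next, by Remark \ref{omega_basic_fct} the Hessian entries agree, $\varphi_{\mid j\bar k}=\varphi_{j\bar k}$, and by Remark \ref{omega_tilde_omega} this gives $\tilde\omega^T(t)=\omega^T(t)=\hat\omega^T(t)+\ii\6_b\bar\6_b\varphi(t)$. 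The claim is thus reduced to matching the two logarithmic volume ratios.

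In the foliated chart of Subsection \ref{foliated_map} the top-degree transverse forms read $(\omega^T(t))^n=n!\det(\hat g^T_{j\bar k}+\varphi_{j\bar k})\,\ii^n\,dz^1\wedge d\bar z^1\wedge\cdots\wedge dz^n\wedge d\bar z^n$, and analogously for $(\omega^T_0)^n$ with $\det g^T_{j\bar k}$. The only mildly non-formal step is handling the $\theta^c(t)$ factor in the numerator. Recalling from Subsection \ref{deform} that $\theta^c(t)=\theta^c+d^c_b\varphi(t)$ and that $d^c_b\varphi(t)$ is a basic $1$-form, hence a combination of the $dz^j$ and $d\bar z^j$ already saturated by the $2n$-fold wedge inside $(\omega^T(t))^n$, the extra piece wedges to zero against $(\omega^T(t))^n$. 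Therefore $(\omega^T(t))^n\wedge\theta\wedge\theta^c(t)=(\omega^T(t))^n\wedge\theta\wedge\theta^c$, and the ratio of volume forms collapses to $\det(\hat g^T_{j\bar k}+\varphi_{j\bar k})/\det g^T_{j\bar k}$. Combined with the leftover $+F$ term this matches \eqref{M_A_local}, which is the local expression of \eqref{M_A}. The argument is purely local and routine; the only bookkeeping to watch is that the sign/normalization of $F$ fixed once and for all by $\Omega=e^F\Omega_0$ is used consistently on both sides.
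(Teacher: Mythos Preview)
Your argument is correct and follows essentially the same line as the paper's proof, which simply invokes \ref{omega_tilde_omega} together with the constancy of basic functions along the leaves. Your extra care in verifying that the $d^c_b\varphi(t)$-correction to $\theta^c$ drops out after wedging with $(\omega^T(t))^n$ is a point the paper leaves implicit; it is a welcome clarification but does not change the route.
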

\begin{proof}
The result follows using \ref{omega_tilde_omega} and the fact that basic functions are constant along leaves.
\hfill\end{proof}

In the local map $\mathcal{U}$ we obtain the equation
\begin{equation}\label{M_A_extended_local}
\begin{cases}
\frac {\6 \varphi}{\6 t}(t)=\log \frac{\left(\det(\hat g^T_{j\bar{k}}+\varphi_{\mid j \bar{k}})\right)}
{\left(\det g^T_{j\bar{k}}\right)} +\frac 1 2 \frac {\6^2 \varphi}{\6x^2}(t)+\frac 1 2 \frac {\6^2 \varphi}{\6y^2}(t)+F, \\
\varphi(0)=0.
\end{cases}
\end{equation}

Denote by $\tilde \omega(t)$ the family of metrics determined by $\tilde \omega^T(t)$ on the transverse complement $Q$ and the initial Euclidean metric on the leafwise distribution $T\caf$. The operator in \eqref{M_A_extended_local} is  the \emph{complex Laplace} operator
\begin{equation}\label{operator_Laplace}
\tilde \Delta=\frac 1 2 \mathrm{tr}_{\tilde \omega(t)} \nabla^\omega d.
\end{equation}
 From  \ref{leading_symbol} we see that this differential operator has the same leading symbol as the Laplace operator
$\Delta_{\tilde \omega(t)}=\frac 1 2 \mathrm{tr}_{\tilde \omega(t)} \nabla^{\tilde \omega(t)} d$. Thus $\tilde \Delta$ is also an elliptic second order differential operator on $M$. The existence and uniqueness of the solution of equation \eqref{M_A_extended_local} on $\mathcal{U}$ can be obtained using the theory of parabolic equation with variable coefficients (see \eg \cite[Chapter 8]{Kr}).
\begin{remark}
For convenience, in the rest of the paper we write $\Delta=\Delta_{\tilde \omega(t)}$.
\end{remark}

The solution also has the property that $\hat g^T_{j\bar k}+\varphi_{\mid j\bar k}$ are the coefficients of a transverse metric tensor, positively defined on the transverse distribution $Q$. The proof is similar to the case of {\K} manifolds, and we refer to \cite[Section 3]{To}. We obtain a solution of class $C^\infty$ of the equation \eqref{M_A_extended_local} on $\mathcal U \times [0,\epsilon_{\mathcal U})$, for $\epsilon_{\mathcal U}>0$ depending on $\mathcal U$. As the manifold $M$ is compact, we use a finite covering to get the existence and uniqueness of the global solution of the equation \eqref{M_A_extended} on $M\times [0, \epsilon)$.

Next we prove that the solution $\varphi(t)$ is a basic function for any $t\in [0, \epsilon)$. The property is local, so we work on the local foliated map $\mathcal U$.

For the computations we use the next results.
\begin{claim}
The following Lie brackets vanish:
\begin{equation}\label{brackets_vanish}
    \begin{split}
    &\left[\frac{\6}{\6 x},X_j\right]= \left[\frac{\6}{\6 x},\bar X_j\right]
    = \left[\frac{\6}{\6 x},\nabla^\omega_{X_j}\bar X_k\right]=0, \\
    &\left[\frac{\6}{\6 y},X_j\right]= \left[\frac{\6}{\6 y},\bar X_j\right]
    = \left[\frac{\6}{\6 y},\nabla^\omega_{X_j}\bar X_k\right]=0.
    \end{split}
\end{equation}
\end{claim}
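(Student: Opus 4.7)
The plan is to exploit two facts already established in the excerpt: first, the local transverse K\"ahler potential $h$ is basic (constant along the leaves of $\mathcal{F}$), so $U(h)=V(h)=0$; second, the Lee and anti-Lee vector fields are parallel with respect to $\nabla^\omega$ (by the definition of a Vaisman manifold applied to $\theta$, and using $\theta^c=\theta\circ J$ together with $\nabla^\omega J=0$ on the relevant parts), and in particular they are Killing, so their flows preserve $\nabla^\omega$. The coordinates are chosen so that $\partial/\partial x=U$ and $\partial/\partial y=V$, which means we are really being asked to show $[U,X_j]=[U,\bar X_j]=[U,\nabla^\omega_{X_j}\bar X_k]=0$ and likewise for $V$.

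First I would tackle the two elementary brackets by direct calculation from \eqref{X_j}. Writing $X_j=\partial/\partial z^j - \ii\,(\partial h/\partial z^j)\,\partial/\partial x$, the bracket $[\partial/\partial x, X_j]$ reduces to two contributions: $[\partial/\partial x,\partial/\partial z^j]=0$ because coordinate vector fields commute, and $[\partial/\partial x,(\partial h/\partial z^j)\partial/\partial x]=(\partial^2 h/\partial x\partial z^j)\partial/\partial x$, which vanishes because $h$ is basic, hence $\partial h/\partial x=0$ and this partial derivative commutes with $\partial/\partial z^j$. The computations for $[\partial/\partial x,\bar X_j]$ and for the two brackets involving $\partial/\partial y$ proceed identically, since $h$ is independent of both $x$ and $y$.

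The remaining brackets, involving $\nabla^\omega_{X_j}\bar X_k$, are the only nontrivial point. The key observation is that $U$ is parallel with respect to $\nabla^\omega$, so it is Killing, and therefore the Lie derivative $\mathcal{L}_U\nabla^\omega$ of the Levi-Civita connection vanishes. Concretely this is the identity
\[
[U,\nabla^\omega_A B]=\nabla^\omega_{[U,A]}B+\nabla^\omega_A[U,B]
\]
valid for all vector fields $A,B$. Applying it with $A=X_j$ and $B=\bar X_k$ and using the just-established vanishing of $[U,X_j]$ and $[U,\bar X_k]$ immediately gives $[U,\nabla^\omega_{X_j}\bar X_k]=0$. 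The same reasoning works for $V$, which is also Killing (it equals $I\theta^\sharp$ and is parallel by \eqref{conex_Vaisman} together with the standard Vaisman identities).

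The main obstacle, which is really only conceptual rather than computational, is recognizing that one needs the Killing property of $U$ and $V$ to handle the Christoffel terms; a purely coordinate-based approach would require computing $\Gamma$-symbols and invoking their basic character. Using the isometric flow is cleaner and makes the whole claim follow in a few lines once the basic identities $[U,X_j]=[U,\bar X_j]=0$ are in hand.
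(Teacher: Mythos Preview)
Your argument for the first four brackets (those not involving $\nabla^\omega$) matches the paper exactly: you expand $X_j$ via \eqref{X_j}, note that the coefficients $\partial h/\partial z^j$ are basic, and conclude. For the remaining brackets $[\partial/\partial x,\nabla^\omega_{X_j}\bar X_k]$ and $[\partial/\partial y,\nabla^\omega_{X_j}\bar X_k]$ you take a genuinely different and cleaner route: you invoke that $U$ and $V$ are Killing, hence affine, so $\mathcal L_{U}\nabla^\omega=0$ gives $[U,\nabla^\omega_{X_j}\bar X_k]=\nabla^\omega_{[U,X_j]}\bar X_k+\nabla^\omega_{X_j}[U,\bar X_k]=0$. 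The paper instead works in coordinates: it verifies that $g(X_j,\partial/\partial x)=0$, combines this with the Euclidean leafwise metric to conclude that all metric coefficients in the frame $\{X_j,\bar X_j,U,V\}$ are basic, and hence that the Christoffel symbols are basic, forcing the brackets to vanish. Your approach bypasses the Christoffel computation entirely and is essentially a one-line application of the affine property once the elementary brackets are known; the paper's approach has the side benefit of making explicit that the full connection data are basic, which is morally what underlies your invariant argument anyway.

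One small correction: you assert that $V$ is parallel, citing \eqref{conex_Vaisman}. That equation only records $\nabla^\omega_U V=\nabla^\omega_V V=0$, and in fact $V=JU$ is \emph{not} parallel on a general Vaisman manifold (since $\nabla^\omega J\neq 0$). What is true, and what you actually need, is that $V$ is Killing; this is a standard Vaisman identity. Since Killing implies affine, your identity $\mathcal L_V\nabla^\omega=0$ still holds and the proof goes through unchanged.
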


\begin{proof}
In the local chart $\mathcal U$ considering the relations \eqref{X_j}, as $h_j$, $h_{\bar j}\in \Omega_b(M)$, we have

\[
\left[\frac{\6}{\6 x},X_j\right]= \left[\frac{\6}{\6 x},\bar X_j\right]= 0,
\]
The similar result holds for the leafwise vector field $\frac{\6}{\6 y}$.

Clearly the coefficients $g_{i \bar j}$ of the  metric $g$ are basic functions, $g_{i \bar j} \in \Omega_b(M)$.

We get
\[
g\left(X_j, \frac{\6}{\6 x} \right)=g\left(\frac{\6}{\6 z^j}, \frac{\6}{\6 x} \right)-
\ii \left(\frac{\6}{\6 x}, \frac{\6}{\6 x} \right)
=g_{jx}-\ii h_j=0,
\]
where $g_{jx}=g\left(\frac{\6}{\6 z^j}, \frac{\6}{\6 x} \right)$. As the restriction of the metric to the leaves is Euclidean, we get that all coefficients of the metric are basic functions. Thus all Levi-Civita coefficients are basic functions, and

\begin{equation*}
    \left[\frac{\6}{\6 x},\nabla^\omega_{X_j}\bar X_k\right]=0.
\end{equation*}
For the vector field $\frac{\6}{\6 y}$ the proof is similar.
\hfill\end{proof}

\begin{claim}
The following relations hold
\begin{equation}\label{comut_Hesse}
    \frac{\6}{\6 x}(\varphi_{\mid j \bar k})(t)=\frac{\6}{\6 x}(\varphi)_{\mid j \bar k}(t), \quad
    \frac{\6}{\6 y}(\varphi_{\mid j \bar k})(t)=\frac{\6}{\6 y}(\varphi)_{\mid j \bar k}(t).
\end{equation}
\end{claim}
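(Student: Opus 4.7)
The plan is to unwind the definition of the Hesse-type symbol $\varphi_{\mid j\bar k}$ from \eqref{derivate} and apply the commutation relations \eqref{brackets_vanish} coordinate-wise. Recall that by definition
\[
\varphi_{\mid j\bar k}(t) = X_j\bigl(\bar X_k(\varphi(t))\bigr) - \bigl(\nabla^\omega_{X_j}\bar X_k\bigr)(\varphi(t)),
\]
so the symbol is obtained by applying a sequence of three vector fields to $\varphi(t)$, namely $X_j$, $\bar X_k$ and $\nabla^\omega_{X_j}\bar X_k$. Since derivations in commuting vector fields commute when applied to a smooth function, the strategy is simply to push $\frac{\partial}{\partial x}$ (or $\frac{\partial}{\partial y}$) through each of these three vector fields using the previous claim.

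Concretely, I would first apply $\frac{\partial}{\partial x}$ to the above expression and use the identity $W_1(W_2 f) = W_2(W_1 f) + [W_1,W_2](f)$ twice, together with the vanishing brackets $[\frac{\partial}{\partial x},X_j]=0$ and $[\frac{\partial}{\partial x},\bar X_k]=0$ from \eqref{brackets_vanish}, to obtain
\[
\tfrac{\partial}{\partial x}\bigl(X_j\bar X_k\varphi(t)\bigr) = X_j\bar X_k\bigl(\tfrac{\partial}{\partial x}\varphi(t)\bigr).
\]
For the second term I would use the third identity in \eqref{brackets_vanish}, namely $[\frac{\partial}{\partial x},\nabla^\omega_{X_j}\bar X_k]=0$, to conclude
\[
\tfrac{\partial}{\partial x}\bigl((\nabla^\omega_{X_j}\bar X_k)(\varphi(t))\bigr) = (\nabla^\omega_{X_j}\bar X_k)\bigl(\tfrac{\partial}{\partial x}\varphi(t)\bigr).
\]
Subtracting the two identities and reading off the definition of the Hesse symbol applied to $\frac{\partial}{\partial x}\varphi(t)$ gives the first stated equality. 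The argument for $\frac{\partial}{\partial y}$ is identical, replacing $\frac{\partial}{\partial x}$ by $\frac{\partial}{\partial y}$ throughout and invoking the corresponding bracket relations.

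There is essentially no obstacle here: the whole content of the claim is that $\frac{\partial}{\partial x}$ and $\frac{\partial}{\partial y}$ are leafwise vector fields that commute (as derivations) with the horizontal vector fields $X_j$, $\bar X_k$ and their Levi-Civita combinations. All of this was already established in the preceding claim, so the present claim is a direct corollary. The only minor subtlety worth noting is that one must apply $\frac{\partial}{\partial x}$ to a scalar function (namely $\varphi(t)$ or one of its first/second derivatives), so no additional torsion or connection terms appear — this is precisely why writing the Hesse symbol in the unpacked form $X_j\bar X_k\varphi - (\nabla^\omega_{X_j}\bar X_k)\varphi$ is convenient.
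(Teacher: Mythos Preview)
Your proposal is correct and is exactly the argument the paper intends: the paper's own proof is the single sentence ``The commutation of the derivatives follows directly from \eqref{brackets_vanish},'' and you have simply unpacked that sentence by expanding $\varphi_{\mid j\bar k}$ via \eqref{derivate} and pushing $\frac{\partial}{\partial x}$ (resp.\ $\frac{\partial}{\partial y}$) past $X_j$, $\bar X_k$, and $\nabla^\omega_{X_j}\bar X_k$ using the vanishing brackets.
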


\begin{proof}
The commutation of the derivatives follows directly from \eqref{brackets_vanish}.
\hfill\end{proof}

\begin{claim}
If $\varphi$ is a solution of the equation \eqref{M_A_extended} on $M\times [0,\epsilon)$, then it satisfies also the equation
\begin{equation} \label{M_A_varphi_basic_global}
    \begin{split}
    \frac{\6}{\6 t} (U(\varphi)^2)&=2 U(\varphi)U\left(\log \frac{\left((\tilde{\omega}^T(t))^n\wedge\theta\wedge\theta^c\right)}
{\left(({\omega}_0^T)^n\wedge\theta\wedge\theta^c\right)}  \right)    \\
   & +\frac 1 2 U^2(U(\varphi)^2)+ \frac 1 2 V^2(U(\varphi)^2)-(U^2(\varphi))^2-(U(V(\varphi)))^2.
   \end{split}
\end{equation}
\end{claim}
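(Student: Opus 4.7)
The plan is a short direct computation: differentiate the evolution equation \eqref{M_A_extended} in the leafwise direction $U=\partial/\partial x$, multiply by $2U(\varphi)$, and reorganize using two elementary algebraic identities for squares.

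First, I record the commutation facts I will use. Since $U=\partial/\partial x$ and $V=\partial/\partial y$ are coordinate vector fields in the foliated chart, $U$ commutes with $\partial/\partial t$ and with $V$; by the preceding Claim \eqref{brackets_vanish} it also commutes with the basic frame vectors $X_j,\bar X_k$ and with $\nabla^\omega_{X_j}\bar X_k$, so by Claim \eqref{comut_Hesse} one has $U(\varphi_{\mid j\bar k})=(U\varphi)_{\mid j\bar k}$. Moreover, the function $F$ is basic, hence $U(F)=0$. Finally, the log term in \eqref{M_A_extended} is unaffected when the factor $\theta^c(t)$ in its numerator is replaced by $\theta^c$, because $\theta^c(t)-\theta^c=d^c_b\varphi(t)$ has only $dz^j$ and $d\bar z^j$ components and is therefore killed upon wedging with the maximal-transverse-degree form $(\tilde\omega^T(t))^n$.

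Next, I apply $U$ to both sides of \eqref{M_A_extended}. The left-hand side becomes $\frac{\6}{\6 t}(U\varphi)$. On the right-hand side $U$ passes into the log (producing the first term of the claim), while the leafwise second-order pieces become $\tfrac{1}{2}U^2(U\varphi)+\tfrac{1}{2}V^2(U\varphi)$ after using $[U,V]=0$ to rewrite $UV^2=V^2U$. This yields
\[
\frac{\6}{\6 t}(U\varphi)=U\!\left(\log\frac{(\tilde\omega^T(t))^n\wedge\theta\wedge\theta^c}{(\omega^T_0)^n\wedge\theta\wedge\theta^c}\right)+\tfrac{1}{2}U^2(U\varphi)+\tfrac{1}{2}V^2(U\varphi).
\]

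To finish, I multiply by $2U(\varphi)$ and use $\frac{\6}{\6 t}(U\varphi)^2=2U(\varphi)\frac{\6}{\6 t}(U\varphi)$, together with the algebraic identities
\[
2\psi\,W^2\psi=W^2(\psi^2)-2(W\psi)^2\qquad\text{for }W\in\{U,V\},
\]
applied to $\psi=U\varphi$. Rewriting $V(U\varphi)=U(V\varphi)$ via $[U,V]=0$ converts the correction $-2(V\psi)^2$ into $-2(U(V\varphi))^2$, producing exactly the two square terms in \eqref{M_A_varphi_basic_global}. The only genuinely nontrivial ingredient is the commutation $U(\varphi_{\mid j\bar k})=(U\varphi)_{\mid j\bar k}$, which guarantees that $U$ differentiates the log term sensibly across the foliation; this is the content of the preceding Claim \eqref{comut_Hesse}, and everything else is bookkeeping.
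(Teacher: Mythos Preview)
Your proof is correct and follows the same route as the paper: apply $U$ to \eqref{M_A_extended}, multiply by $2U(\varphi)$, and reorganize via $\psi\,W^2\psi=\tfrac12 W^2(\psi^2)-(W\psi)^2$ together with $U(F)=0$ and $[U,V]=0$. You helpfully make explicit why $\theta^c(t)$ may be replaced by $\theta^c$ in the log (a point the paper leaves tacit); your closing remark that \eqref{comut_Hesse} is the crucial ingredient slightly overshoots, since that Hessian commutation is only invoked in the subsequent local computation \ref{M_A_varphi_basic_local1}, not in the present Claim.
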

\begin{proof} \textbf{Step 1: We use equation \eqref{M_A_extended} to compute the following.}
\begin{equation*}
    \begin{split}
    \frac{\6}{\6 t} (U(\varphi)^2)&
    =2U(\varphi)\frac{\6}{\6 t}(U(\varphi))
    =2U(\varphi)U\left(\frac{\6 \varphi}{\6 t}\right) \\
    &=2 U(\varphi)U\left(\log \frac{\left((\tilde{\omega}^T(t))^n\wedge\theta\wedge\theta^c\right)}
{\left(({\omega}_0^T)^n\wedge\theta\wedge\theta^c\right)}  \right)
    +U(\varphi)U^2(U(\varphi)) \\
   & +U(\varphi)V^2(U(\varphi))+U(\varphi)U\left(F\right).
    \end{split}
\end{equation*}

\textbf{Step 2: For the last three terms we use that $F\in \Omega_b(M)$ and $[U,V]=0$}. The following relations hold.

\begin{equation*}
    \begin{split}
    U(\varphi)U^2(U(\varphi))&=\frac 1 2 U^2(U(\varphi)^2)-(U^2(\varphi))^2, \\
    U(\varphi)V^2(U(\varphi))&=\frac 1 2 V^2(U(\varphi)^2)-(U(V(\varphi)))^2, \\
    U(F)&=0.
    \end{split}
\end{equation*}
Thus we obtain equation \eqref{M_A_varphi_basic_global}.
\hfill\end{proof}

For convenience we denote in the sequel $U(\varphi)$ by $u$.
We now express equation \eqref{M_A_varphi_basic_global} in the local chart $\left(\mathcal{U}, z^1,\ldots, z^n, \bar{z}^1, \ldots, \bar{z}^n,x,y\right)$.

\begin{lemma}\label{M_A_varphi_basic_local1}
In local coordinates the equation \eqref{M_A_varphi_basic_global} becomes
\begin{equation}\label{M_A_varphi_basic_local}
    \begin{split}
    \frac{\6}{\6 t}u^2&=({\tilde g^T})^{j \bar k} u^2_{\mid j \bar k}
    +\frac 1 2 \frac{{\6}^2 u^2}{\6 x^2}+\frac 1 2 \frac{{\6}^2 u^2}{\6 y^2}
    -({\tilde g^T})^{j \bar k}u_{\mid j}u_{\mid \bar k} \\
    &-\left(\frac{\6 u}{\6 x}\right)^2-\left(\frac{\6 u}{\6 y}\right)^2,
    \end{split}
\end{equation}
where $({\tilde g^T})^{j \bar k}$ are the coefficients of the inverse matrix of $\tilde g_{j \bar k}^T=\hat g_{j \bar k}^T+\varphi_{\mid j \bar k}$.
\end{lemma}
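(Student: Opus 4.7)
The plan is to transcribe each term of the global identity \eqref{M_A_varphi_basic_global} into the foliated chart $(\mathcal{U}, z^j, \bar z^j, x, y)$ and then simplify the logarithmic contribution via the product rule for the complex Hessian. The leafwise translations are immediate: since $U=\6/\6 x$ and $V=\6/\6 y$ in this chart, $U^2(u^2)=\6^2 u^2/\6 x^2$ and $V^2(u^2)=\6^2 u^2/\6 y^2$, while $U^2\varphi=U(u)=\6 u/\6 x$ and, using $[U,V]=0$, $U(V\varphi)=V(u)=\6 u/\6 y$. These account directly for the two leafwise Laplace-type terms and the two leafwise squared terms on the right-hand side of \eqref{M_A_varphi_basic_local}.

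The core of the argument is the logarithmic term. In the chart one has
\[
(\tilde\omega^T(t))^n\wedge\theta\wedge\theta^c = n!\det(\tilde g^T_{j\bar k})\cdot(\ii)^n\, dz^1\wedge d\bar z^1\wedge\cdots\wedge dz^n\wedge d\bar z^n\wedge\theta\wedge\theta^c,
\]
where $\tilde g^T_{j\bar k}:=\hat g^T_{j\bar k}+\varphi_{\mid j\bar k}$, with an analogous formula (with $g^T_{j\bar k}$ in place of $\tilde g^T_{j\bar k}$) for the denominator. The ambient coordinate volume and the leafwise factor $\theta\wedge\theta^c$ cancel, reducing the logarithm to $\log\det\tilde g^T_{j\bar k}-\log\det g^T_{j\bar k}$.

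Next I would differentiate along $U$. Both $g^T_{j\bar k}$ and $\hat g^T_{j\bar k}$ are basic and hence annihilated by $U$, and the commutation \eqref{comut_Hesse} gives $U(\varphi_{\mid j\bar k})=u_{\mid j\bar k}$; applying the standard identity $U(\log\det A)=\mathrm{tr}(A^{-1}U(A))$ then yields
\[
U\bigl(\log\det\tilde g^T_{j\bar k}\bigr) = (\tilde g^T)^{j\bar k} u_{\mid j\bar k}.
\]
Multiplying by $2u$ and invoking the product rule for the complex Hessian, which from the definition \eqref{derivate} expresses $(u^2)_{\mid j\bar k}$ in terms of $u\cdot u_{\mid j\bar k}$ and $u_{\mid j}u_{\mid\bar k}$, will rewrite this contribution as the combination $(\tilde g^T)^{j\bar k}(u^2)_{\mid j\bar k} - (\tilde g^T)^{j\bar k}u_{\mid j}u_{\mid\bar k}$ appearing in \eqref{M_A_varphi_basic_local}.

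The computation is otherwise entirely routine local calculus. The only point calling for care is the cancellation of $\theta\wedge\theta^c$ in the ratio of top forms: although $\theta^c$ is not purely leafwise, the same factor appears in numerator and denominator and drops out cleanly. I therefore do not anticipate any genuine obstacle beyond bookkeeping of numerical constants in the product rule for the complex Hessian.
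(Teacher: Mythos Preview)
Your proposal is correct and follows essentially the same route as the paper: transcribe the leafwise terms of \eqref{M_A_varphi_basic_global} directly, reduce the logarithmic ratio of top forms to $\log\det(\hat g^T_{j\bar k}+\varphi_{\mid j\bar k})-\log\det g^T_{j\bar k}$, differentiate along $U$ using basicity of $g^T_{j\bar k}$, $\hat g^T_{j\bar k}$ together with the commutation \eqref{comut_Hesse} to get $(\tilde g^T)^{j\bar k}u_{\mid j\bar k}$, and then apply the product rule $2u\cdot u_{\mid j\bar k}=u^2_{\mid j\bar k}-2u_{\mid j}u_{\mid\bar k}$. Your caveat about bookkeeping of constants is well placed, since the paper's own displayed intermediate step and final formula carry slightly inconsistent numerical factors on the quadratic terms.
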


\begin{proof}
From \eqref{M_A_varphi_basic_global} we get on $\mathcal U$

\begin{equation*}
    \frac{\6}{\6 t}u^2=2u \frac{\6}{\6 x}\left(\log \frac{\left(\det(\hat g^T_{j\bar{k}}+\varphi_{\mid j \bar{k}})\right)}
{\left(\det g^T_{j\bar{k}}\right)}\right)
+\frac 1 2 \frac{{\6}^2 u^2}{\6 x^2}+\frac 1 2 \frac{{\6}^2 u^2}{\6 y^2}
-2\left(\frac{\6 u}{\6 x}\right)^2-2\left(\frac{\6 u}{\6 y}\right)^2.
\end{equation*}
Thus, using \eqref{comut_Hesse} we derive
\begin{equation*}
\begin{split}
\frac{\6}{\6 x}\left(\log \frac{\left(\det(\hat g^T_{j\bar{k}}+\varphi_{\mid j \bar{k}})\right)}
{\left(\det g^T_{j\bar{k}}\right)}\right)& =\frac{\6}{\6 x}\left(\log \left(\det(\hat g^T_{j\bar{k}}+\varphi_{\mid j \bar{k}})\right) \right) \\
&=({\tilde g^T})^{j \bar k}\frac{\6}{\6 x}(\hat g^T_{j \bar k}+\varphi_{\mid j \bar k})
=({\tilde g^T})^{j \bar k}
{u}_{\mid j \bar k}
\end{split}
\end{equation*}
Finally $2u \cdot u_{\mid j \bar k}=u^2_{\mid j \bar k}-2u_{\mid j}u_{\mid \bar k}$, and the equation \eqref{M_A_varphi_basic_local} is obtained.
\hfill\end{proof}

In the following denote for convenience the right part of \eqref{M_A_varphi_basic_local} by $Lu$.
\begin{remark}\label{Lu_eliptic}
We remark that the terms of order 2 of $Lu$ are in fact the complex Laplace operator written in \eqref{operator_Laplace}. Thus the equation is of parabolic type.
\end{remark}

We prove that if $\varphi $ is a solution of equation \eqref{M_A_varphi_basic_global} on $M\times[0,\epsilon)$, then $u=0$. We achieve this using the maximum principle for this type of equation \cite[Theorem 8.1.2]{Kr}.

\begin{claim}\label{max_priciple}
Assume that $\varphi$ is a solution of equation \eqref{M_A_varphi_basic_global} on $M\times[0,\epsilon)$. Thus if $u=0$ for $t=0$, then this property holds for any $t \in [0,\epsilon)$.
\end{claim}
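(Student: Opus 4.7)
The plan is to exploit the structure of equation \eqref{M_A_varphi_basic_local} and show that $u^2$ satisfies a parabolic \emph{differential inequality} with no zeroth-order term, so the parabolic maximum principle on the closed manifold $M$ forces $u^2$ to remain identically zero.

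More precisely, I would first rewrite \eqref{M_A_varphi_basic_local} in the schematic form
\begin{equation*}
\frac{\partial}{\partial t} u^2 \;=\; \mathcal{L}(u^2) \;-\; \mathcal{R},
\end{equation*}
where the second-order part is
\begin{equation*}
\mathcal{L}(u^2) \;=\; (\tilde g^T)^{j\bar k}\, u^2_{\mid j\bar k} \;+\; \tfrac{1}{2}\,\frac{\partial^2 u^2}{\partial x^2} \;+\; \tfrac{1}{2}\,\frac{\partial^2 u^2}{\partial y^2},
\end{equation*}
and the remainder is
\begin{equation*}
\mathcal{R} \;=\; (\tilde g^T)^{j\bar k} u_{\mid j} u_{\mid \bar k} \;+\; \Bigl(\frac{\partial u}{\partial x}\Bigr)^{\!2} \;+\; \Bigl(\frac{\partial u}{\partial y}\Bigr)^{\!2}.
\end{equation*}
By the existence discussion leading to Remark \ref{Lu_eliptic}, the matrix $(\tilde g^T_{j\bar k})$ is positive definite Hermitian for $t\in[0,\epsilon)$, and hence so is its inverse. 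Consequently $\mathcal{R}\geq 0$ pointwise and $\mathcal{L}$ is a genuinely elliptic second-order operator on $M$ (the transverse part is elliptic along $Q$ by positive definiteness of $(\tilde g^T)^{j\bar k}$, and the two leafwise Euclidean Laplacian terms supply ellipticity in the directions $U$, $V$). Although $\mathcal{L}$ is defined via foliated charts, the sum of the transverse complex Laplacian and the leafwise part is globally well defined, because the formula agrees with the complex Laplacian $\tilde\Delta$ of \eqref{operator_Laplace} associated to the metric $\tilde\omega(t)$ (transverse piece $\tilde\omega^T(t)$ plus the initial Euclidean leafwise metric).

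Combining these two observations gives the differential inequality
\begin{equation*}
\frac{\partial}{\partial t} u^2 \;\leq\; \mathcal{L}(u^2) \qquad \text{on } M\times[0,\epsilon),
\end{equation*}
with $u^2\geq 0$ everywhere and $u^2(\cdot,0)\equiv 0$ by hypothesis. Since $M$ is closed, the parabolic maximum principle (\emph{cf.}\ \cite[Theorem~8.1.2]{Kr}) applied to $u^2$ yields
\begin{equation*}
\max_{M} u^2(\cdot,t) \;\leq\; \max_{M} u^2(\cdot,0) \;=\; 0
\end{equation*}
for every $t\in[0,\epsilon)$. Together with $u^2\geq 0$ this forces $u^2\equiv 0$, hence $u=U(\varphi)\equiv 0$ on $M\times[0,\epsilon)$, as required.

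The only genuine subtlety is verifying that $\mathcal{L}$ is globally an elliptic operator on $M$, not merely transversally elliptic: this is where the extension of the Monge--Amp\`ere equation along the leaves (i.e.\ the addition of $\tfrac12 U^2+\tfrac12 V^2$ in \eqref{M_A_extended}) pays off, since without it the second-order part would degenerate in the leafwise directions and the standard maximum principle would not directly apply. Once this point is noted, the argument reduces to the textbook parabolic maximum principle on a closed manifold.
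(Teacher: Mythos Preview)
Your proof is correct and follows the same strategy as the paper: both exploit that $u^2$ is a subsolution of a genuinely parabolic equation on the closed manifold $M$ and conclude via the maximum principle. The only cosmetic difference is that the paper carries out the barrier argument explicitly (introducing $v=u^2-\gamma/(\epsilon'-t)$ and locating its maximum on the parabolic boundary), whereas you invoke the maximum principle as a black box after isolating the inequality $\partial_t u^2\le \mathcal{L}(u^2)$.
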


\begin{proof}
\textbf{Step. 1: Define an auxiliary function $v=u^2-\frac \gamma {\epsilon^\prime -t}$}. Here $\gamma>0$ is a real number, and the function is defined on the domain $\bar D$, where $D=M\times(0,\epsilon^\prime)$, $\epsilon^\prime \in [0,\epsilon)$.

\textbf{Step. 2: We prove that the maximum of $v$ is attained on the \emph{parabolic} boundary $M\times\{0\}$. } As $\varphi$ is a smooth function on $\bar D=M\times[0,\epsilon^\prime]$, the function $v=u^2-\frac \gamma {\epsilon^\prime -t}$ attains its maximum at a point $p_\gamma \in \bar D$. As $v\rightarrow -\infty$ when $t\rightarrow \epsilon^\prime$, $p_\gamma$ it is not on the upper lid $M\times \{\epsilon^\prime \}$. Suppose that $p_\gamma \in D$. Let $\left(\mathcal{U}, z^1,\ldots, z^n, \bar{z}^1, \ldots, \bar{z}^n,x,y\right)$ be a foliated map around $p_\gamma$. Then, according to \ref{M_A_varphi_basic_local1}, the equation has the form \eqref{M_A_varphi_basic_local}. In the following consider $t$ fixed. At the point $p_\gamma$ of coordinates $(z^i_{p_\gamma},\bar z^i_{p_\gamma}, x_{p_\gamma}, y_{p_\gamma}, t_{p_\gamma})$, the map
$(z^i,\bar z^i, x, y) \longmapsto v$ has a local maximum. Hence $u^2$ has also a local maximum at $p_\gamma$, while $u$ has a local extremum.

Using \ref{Lu_eliptic} we also obtain
\begin{equation*}
   \left(u_{\mid j}\right)_{p_\gamma}=\left(u_{\mid \bar k}\right)_{p_\gamma}=0, \quad
   \left(Lu\right)_{p_\gamma}\le 0.
\end{equation*}
for $1\le j,k\le n$.

Now we fix $z^i,\bar z^i, x, y$ and consider the map $t \longmapsto v$. At ${p_\gamma}$ we get

\begin{equation*}
    \left(\frac{\6v}{\6t} \right)_{p_\gamma}=\left(\frac{\6u^2}{\6t}\right)_{p_\gamma}
    -\frac{\gamma}{(\epsilon^\prime-t_{p_\gamma})^2} =0.
\end{equation*}
Thus
\begin{equation*}
    0=\left(Lu-\frac{\6 u^2}{\6 t}\right)_{p_\gamma}\le -\frac{\gamma}{(\epsilon^\prime-t_{p_\gamma})^2}<0.
\end{equation*}
The contradiction implies that the maximum of $v$ is attained for $t=0$.

\textbf{Step 3: Obtain an estimation for the solution}. As a consequence $v\le0$, and as $\gamma>0$ is arbitrary, we get that $u=0$ on $\bar D$.

Finally, $\epsilon^\prime \in [0,\epsilon)$ can be taken arbitrary and the result follows.
\hfill\end{proof}

\begin{lemma}\label{varphi_basic}
The solution $\varphi\in M\times [0,\epsilon)$ of the equation \eqref{M_A_extended} is a basic function.
\end{lemma}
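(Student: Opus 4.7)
The plan is to show that $U(\varphi)=0$ and $V(\varphi)=0$ identically on $M\times[0,\epsilon)$, which by definition means $\varphi(t)$ is invariant along the leaves of $\caf$ and hence basic. The initial condition $\varphi(0)=0$ gives $U(\varphi)(0)=V(\varphi)(0)=0$, so one only needs a propagation-in-time statement.

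For $u:=U(\varphi)$ this is exactly the content of \ref{max_priciple}: the function $u^2$ satisfies the parabolic equation derived in \ref{M_A_varphi_basic_local1}, and the maximum-principle argument already carried out produces the vanishing $u\equiv 0$ on $M\times[0,\epsilon)$. So the first step is simply to invoke \ref{max_priciple} to conclude $U(\varphi)\equiv 0$.

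The second step is to repeat the same scheme for $v:=V(\varphi)$. The derivation of the global equation \eqref{M_A_varphi_basic_global} used only three facts: (i) $[U,V]=0$, so time derivatives and leafwise derivatives of $\varphi$ commute; (ii) the bracket relations \eqref{brackets_vanish}, which are stated and proved symmetrically for both $\partial/\partial x$ and $\partial/\partial y$; and (iii) $F\in\Omega_b(M)$, which gives $U(F)=V(F)=0$. Since the extended Monge--Amp\`ere equation \eqref{M_A_extended} treats $U$ and $V$ symmetrically (the leafwise part is $\tfrac12 U^2(\varphi)+\tfrac12 V^2(\varphi)$, and the log-ratio term is a basic expression once one notes that applying $V$ commutes with the Hesse construction in exactly the same way as $U$), the same computation yields a parabolic equation for $v^2$ of the form
\[
\frac{\partial v^2}{\partial t}=Lv\,,
\]
with $L$ having the same leading symbol as in \ref{M_A_varphi_basic_local1}. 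Applying the maximum-principle argument of \ref{max_priciple} verbatim to $v$ (using the auxiliary function $v^2-\gamma/(\epsilon'-t)$) then gives $V(\varphi)\equiv 0$ on $M\times[0,\epsilon)$.

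Having both $U(\varphi)=0$ and $V(\varphi)=0$ for all $t\in[0,\epsilon)$, the function $\varphi(t)$ is annihilated by every leafwise vector field, so $\varphi(t)\in\Omega_b^0(M)$ for each $t$, completing the proof. The only real subtlety is the symmetric transcription for $V$; everything else is an immediate application of \ref{max_priciple}.
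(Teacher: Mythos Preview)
Your proposal is correct and follows exactly the paper's own argument: invoke \ref{max_priciple} to get $U(\varphi)=0$, then note that the derivation is symmetric in $U$ and $V$ so the identical argument yields $V(\varphi)=0$, and conclude that $\varphi$ is basic since $T\caf$ is spanned by $U$ and $V$. The paper's proof is terser (it just says ``using a similar argument we obtain $V(\varphi)=0$''), but the content is the same.
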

\begin{proof}
From \ref{max_priciple} we obtain that $U(\varphi)=0$. Using a similar argument we obtain $V(\varphi)=0$. As the leafwise distribution $T\caf$ is generated by the vector fields $U$ and $V$, we obtain the conclusion.
\hfill\end{proof}
\begin{proposition}\label{solution_Monge_short}
 The transverse Monge-Amp\`ere equation \eqref{Monge_Ampere} has a solution $\varphi$ for a short time $t\in[0,\epsilon)$, which is a basic function.
\end{proposition}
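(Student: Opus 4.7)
The plan is to assemble the machinery already developed in this subsection. The short-time solution of the transverse Monge-Amp\`ere equation \eqref{Monge_Ampere} will be obtained indirectly: first by solving the ``extended'' equation \eqref{M_A_extended}, which is fully parabolic on $M$ (rather than only transversally parabolic), and then by invoking basicness to drop back to the original equation.

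First, I would cover the compact manifold $M$ by finitely many foliated charts $\mathcal{U}$ as in Subsection \ref{foliated_map}, on each of which \eqref{M_A_extended} takes the explicit form \eqref{M_A_extended_local}. By \ref{Lu_eliptic}, the linearization of the right-hand side at the initial datum $\varphi=0$ has principal part $\frac{1}{2}\operatorname{tr}_{\tilde\omega}\nabla^\omega d+\frac{1}{2}(\partial_x^2+\partial_y^2)$; since $\omega_0^T$ is positive on $Q$ and the leafwise metric is Euclidean, this second order operator is uniformly elliptic on the whole of $TM$. Consequently, \eqref{M_A_extended_local} is a genuinely fully nonlinear parabolic Monge-Amp\`ere equation on each chart, to which standard local short-time existence and uniqueness theory (\cite[Chapter 8]{Kr}) applies, yielding a smooth solution on $\mathcal{U}\times[0,\epsilon_\mathcal{U})$ for some $\epsilon_\mathcal{U}>0$. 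The open condition $\hat g^T_{j\bar k}+\varphi_{\mid j\bar k}>0$ on $Q$, which ensures the logarithm is well defined, holds at $t=0$ and persists for small time by continuity.

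Second, I would glue the local solutions into a global one. Uniqueness on overlaps plus the finite cover of the compact manifold $M$ furnishes a uniform $\epsilon>0$ and a smooth global solution $\varphi\colon M\times[0,\epsilon)\to\RR$ of the extended equation \eqref{M_A_extended}. At this point \ref{varphi_basic} applies and forces $\varphi(t)$ to be basic for every $t\in[0,\epsilon)$. Finally, by \ref{M_A_M_A_extended}, on basic functions \eqref{M_A_extended} coincides with \eqref{M_A} (equivalently \eqref{Monge_Ampere}). Therefore $\varphi$ is a basic short-time solution of the transverse Monge-Amp\`ere equation, as required.

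The substantive step, already resolved earlier in the subsection, is not the local parabolic existence but the detour through \eqref{M_A_extended}: the original equation \eqref{Monge_Ampere} lives naturally on the space of basic functions, where the standard parabolic machinery cannot be invoked directly. The key technical point is therefore the maximum-principle argument of \ref{max_priciple} that recovers basicness of the extended solution; once that is in hand, the present proposition is a straightforward assembly of the pieces.
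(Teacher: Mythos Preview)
Your proposal is correct and follows essentially the same approach as the paper: solve the extended parabolic equation \eqref{M_A_extended} via standard local theory and a finite-cover/gluing argument, then invoke \ref{varphi_basic} to obtain basicness and \ref{M_A_M_A_extended} to identify the solution with one of \eqref{Monge_Ampere}. The paper's proof of this proposition is in fact just the one-line citation of \ref{varphi_basic} and \ref{M_A_M_A_extended}, the preceding discussion having already established the short-time solution of the extended equation exactly as you describe.
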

\begin{proof}
The result is obtained from \ref{varphi_basic} and \ref{M_A_M_A_extended}.
\hfill\end{proof}

\begin{remark}
If the basic Chern class $c_1^b(M)$ and the cohomology class of the initial {\K} form $\omega^T_0$ are connected by the relation
\begin{equation}\label{condition}
    c_1^b(M)=k[\omega^T_0],
\end{equation}
where, using a rescaling, $k$ can be taken as $-1$, $0$ or $1$, then $[\omega^T(t)]$ is fixed and $\omega^T(t)=\omega^T_0+\ii \6\bar \6 \varphi(t)$.
\end{remark}

Finally, we use \ref{eq_Ricci_eq_Monge_equiv} and the method of deformation of a Vaisman structure  with basic functions from Section \ref{deform} to obtain the following theorem.

\begin{theorem}
On a Vaisman manifold $M$ the transverse {\K}-Ricci equation \eqref{transv_Ricci_eq} has a solution for $t\in [0,\epsilon)$, for a small $\epsilon>0$. Furthermore, if the condition \eqref{condition} is satisfied, the transverse {\K}-Ricci flow can be associated to a family of Vaisman structures defined on $M$.
\end{theorem}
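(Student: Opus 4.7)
The plan is to combine the analytic existence result already established with the geometric deformation procedures of Sections~\ref{deform} and~\ref{deform_homot}. Proposition~\ref{solution_Monge_short} supplies all the analytic content, so what remains is essentially a bookkeeping argument about basic cohomology classes and the compatibility of Vaisman deformations.

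First I would invoke Proposition~\ref{solution_Monge_short} to obtain, for some $\epsilon>0$, a smooth family of basic functions $\{\varphi(t)\}_{t\in[0,\epsilon)}$ with $\varphi(0)=0$ solving the transverse Monge--Amp\`ere equation \eqref{Monge_Ampere}. By the equivalence in Proposition~\ref{eq_Ricci_eq_Monge_equiv}, the family $\omega^T(t):=\hat\omega^T(t)+\ii\,\partial_b\bar\partial_b\varphi(t)$ then satisfies \eqref{transv_Ricci_eq} on $[0,\epsilon)$, which settles the first assertion.

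For the second assertion, assume $c_1^b(M)=k[\omega_0^T]$. Then $[\omega^T(t)]=(1-tk)[\omega_0^T]$, so after shrinking $\epsilon$ so that $1-tk>0$ throughout, one may take $\hat\omega^T(t):=(1-tk)\omega_0^T$. For each such $t$, I would proceed in two steps. Step one: apply the Q-homothetic deformation of Section~\ref{deform_homot} with constant $a(t):=1-tk$, producing an intermediate Vaisman structure with unchanged complex structure $J$, canonical foliation $\caf$ and transverse complement $Q$, but with Lee form $a(t)\theta$ and transverse {\K} form $a(t)\omega_0^T=\hat\omega^T(t)$. Step two: since $\caf$ and $Q$ have not moved, $\varphi(t)$ is still a basic function in the sense required by Section~\ref{deform}, so I would apply the deformation formulas \eqref{J_t}--\eqref{g_t1} to this intermediate Vaisman structure, driven by the single basic function $\varphi(t)$. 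The output is a Vaisman structure $(M,J(t),g(t))$ whose transverse {\K} fundamental form is
$$\hat\omega^T(t)+\ii\,\partial_b\bar\partial_b\varphi(t)=\omega^T(t),$$
i.e.\ precisely the solution of the transverse {\K}-Ricci flow.

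The point to verify is that this two-step construction depends smoothly on $t$ and reduces to the original Vaisman structure at $t=0$: smoothness is immediate from the smoothness of $\varphi(t)$ (Proposition~\ref{solution_Monge_short}) and of $a(t)=1-tk$, while the reduction at $t=0$ follows from $\varphi(0)=0$ and $a(0)=1$. The main obstacle is conceptual rather than computational: one must reconcile the fact that the transverse {\K}-Ricci flow moves the basic cohomology class $[\omega^T(t)]$, whereas the basic-function deformation of Section~\ref{deform} preserves it; under condition \eqref{condition} this mismatch is exactly linear in $t$ along the ray $\R_{>0}\cdot[\omega_0^T]$, which is what makes the Q-homothetic step of Section~\ref{deform_homot} sufficient to absorb it.
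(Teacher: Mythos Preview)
Your argument is correct, and for the first assertion it coincides with the paper's: both simply combine \ref{solution_Monge_short} with \ref{eq_Ricci_eq_Monge_equiv}.

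For the second assertion your route differs from the paper's. The paper relies on the remark immediately preceding the theorem, which asserts that under condition \eqref{condition} the class $[\omega^T(t)]$ is fixed and $\omega^T(t)=\omega_0^T+\ii\partial_b\bar\partial_b\varphi(t)$; it then invokes only the basic-function deformation of Section~\ref{deform}. You correctly observe that for the \emph{unnormalized} flow \eqref{transv_Ricci_eq} one actually has $[\omega^T(t)]=(1-tk)[\omega_0^T]$, so the class moves along the ray through $[\omega_0^T]$ when $k\neq 0$, and you repair this by preceding the Section~\ref{deform} deformation with the $Q$-homothety of Section~\ref{deform_homot} with factor $a(t)=1-tk$. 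This two-step construction is sound: the $Q$-homothety preserves $J$, the canonical foliation $\caf$ and the transverse complement $Q$ (so $\varphi(t)$ remains basic and $\partial_b,\bar\partial_b$ are unchanged), and it delivers exactly the reference form $\hat\omega^T(t)=(1-tk)\omega_0^T$; applying Section~\ref{deform} then yields a Vaisman structure with transverse form $\omega^T(t)$. What your approach buys is a construction that is honest for all $k\in\{-1,0,1\}$ with the unnormalized flow, at the cost of a Lee form $a(t)\theta$ varying with $t$. The paper's shorter argument is literally correct only for $k=0$ (or, implicitly, for the rescaled flow \eqref{rescaled_transv_Ricci_eq} used later, where the class genuinely is fixed when $k=-1$); your version makes the missing step explicit.
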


In the following, for the maximal existence time, the main reference source is \cite{Be-He-Ve};  the result is obtained in a similar manner to the case of compact {\K} manifold, using a local one to one correspondence between transverse mathematical objects and their projections on a local transversal (see also \ref{corespond} ). For global estimates, the authors devise a min-max principle for basic maps (see \cite[Proposition 6.9]{Be-He-Ve}).  In our particular frame, the result obtained in the above referred paper for general Riemannian foliations becomes:

\begin{theorem}
Let $M$ be a Vaisman compact manifold and let $\omega^T(t)$ be the transverse {\K}-Ricci flow with respect to canonical foliation, described by relations \eqref{transv_Ricci_eq}. Then $\omega^T(t)$ is defined on the maximal time interval $[0, T_{max})$, where $T_{max}\le \infty$ is defined as
\begin{equation*}
  T_{max}=\sup\{T>0\mid [\omega^T_0]-Tc_1^b(M)\in \mathcal{C}^b_M\}.
\end{equation*}
\end{theorem}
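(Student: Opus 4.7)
The plan is to combine the reformulation of the transverse \K-Ricci flow as the basic Monge-Amp\`ere equation \eqref{Monge_Ampere} (Proposition \ref{eq_Ricci_eq_Monge_equiv}) with the short-time existence in Proposition \ref{solution_Monge_short}, and to extend the solution by a standard continuity argument supported by a priori estimates on the basic potential $\varphi(t)$. The easy direction is to observe that if $\omega^T(t)$ is a transverse \K\ form on $[0,T)$, then its basic cohomology class $[\omega^T(t)]=[\omega^T_0]-t\,c_1^b(M)$ must lie in the open transverse \K\ cone $\mathcal{C}^b_M$, so that $T\le T_{max}$ automatically; it remains to show that $T_{max}$ is actually attained.

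For the converse, I would let $T_\ast\in(0,T_{max}]$ be the maximal existence time of a smooth basic solution and argue by contradiction, assuming $T_\ast<T_{max}$. By Proposition \ref{solution_Monge_short} it suffices to establish uniform $C^\infty$ bounds on $\varphi(t)$ on $M\times[0,T_\ast)$: a smooth limit at $t=T_\ast$ then supplies an initial datum from which the flow can be restarted, contradicting maximality. The estimates follow the Yau--Cao scheme for the parabolic complex Monge-Amp\`ere equation: a uniform $C^0$ bound on $\varphi$; a bound on $\dot\varphi$ obtained by differentiating \eqref{Monge_Ampere} in time; a transverse second-order estimate on $\tr_{\omega^T_0}\omega^T(t)$ via the Aubin--Yau trick applied to an auxiliary function of the form $\log \tr_{\omega^T_0}\omega^T(t)-A\varphi$ for a large constant $A$; and higher-order regularity from a transverse Evans--Krylov type estimate together with parabolic Schauder theory. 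Because all the quantities involved are basic, Remark \ref{corespond} lets me identify them pointwise with their projections on a local \K\ transversal $\mathcal{T}$, where the classical estimates apply, and a finite foliated cover of the compact manifold $M$ combined with the basic min-max principle of \cite{Be-He-Ve} would patch the local bounds into global ones.

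The main obstacle will be the second-order estimate. In the compact \K\ setting the Aubin--Yau computation rests on the maximum principle applied to a carefully chosen test function; in the foliated setting I must ensure that this auxiliary function is basic, so that the min-max principle for basic functions substitutes for the ordinary maximum principle, and that the commutator identities employed in the calculation remain valid when derivatives are taken with respect to the basis $\{X_j,\bar X_j\}$ of \eqref{X_j}. Both points should be handled by the projectability of the Bott connection onto $\mathcal{T}$ and by the fact that Hessians of basic functions are again basic (Remark \ref{omega_basic_fct}). Once the $C^2$ bound is in place, the higher-order bounds follow routinely, and passing to the limit as $t\nearrow T_\ast$ yields the transverse \K\ form needed to restart the flow via Proposition \ref{solution_Monge_short}, contradicting the assumed maximality of $T_\ast$ and completing the argument.
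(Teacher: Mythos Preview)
Your outline is correct and is precisely the strategy the paper itself points to: the paper does not give an independent proof of this theorem but refers the reader to \cite{Be-He-Ve}, noting only that the argument runs as in the compact \K\ case via the local one-to-one correspondence with a \K\ transversal (Remark~\ref{corespond}) together with the min-max principle for basic functions from \cite[Proposition~6.9]{Be-He-Ve}. Your sketch of the Yau--Cao scheme for the basic Monge--Amp\`ere equation is therefore more detailed than, but entirely consistent with, what the paper supplies.
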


Assume now that $c_1^b(M)<0$, \ie there is a transverse metric $\alpha^T$ such that $-\alpha^T\in c_1^b(M)$. Thus, from \eqref{eq_classes} we obtain that $[\omega^T(t)]\in \mathcal{C}^b_M$, and the transverse {\K}-Ricci flow is defined for any $t \in[0,\infty)$. We investigate in what follows the long time convergence of $\omega^T(t)$.

From the equation \eqref{eq_classes}, under the above assumptions $[\omega^T(t)]$ does not converges when $t \rightarrow \infty$. Thus, as in the case of {\K} manifolds, we "rescale" the transverse {\K}-Ricci equation. More precisely, we consider the equation
\begin{equation}
\begin{cases}\label{rescaled_transv_Ricci_eq}
\frac {\6}{\6 t}\omega^T(t)=-\mathrm{Ric}(\omega^T(t))-\omega^T(t), \\
\omega^T(0)=\omega^T_0.
\end{cases}
\end{equation}
We rescale the flow as follows. Take  $\bar\omega^T(s)=e^t\cdot\omega^T(t)$, where $s=e^t-1$. Then
\begin{equation*}
   \frac {\6}{\6 t}\omega^T(t)=\frac{\6}{\6 t}\bar \omega^T(s)-\frac{\bar \omega^T(s)}{s+1},\, \mbox{and}\, \Ric(\omega^T(t))+\omega^T(t)=\Ric(\bar\omega^T(s))+\frac{\bar \omega^T(s)}{s+1}.
\end{equation*}
From \eqref{rescaled_transv_Ricci_eq} we obtain $\frac{\6}{\6 t}\bar \omega^T(s)=-\Ric(\bar\omega^T(s))$, so $\omega^T(t)$ verifies the rescaled equation \eqref{rescaled_transv_Ricci_eq} if and only if $\bar\omega^T(s)$ verifies the transverse {\K}-Ricci equation \eqref{transv_Ricci_eq}. Applying the results of \cite{Be-He-Ve} in the particular framework of Vaisman compact manifolds, we get:

\begin{theorem}\label{theorem}
Assume that on a Vaisman manifold $M$  the basic Chern class satisfies $c_1^b(M)<0$. Then the rescaled {\K}-Ricci equation \eqref{rescaled_transv_Ricci_eq}
has solution for $t\in[0,\infty)$. For $t \rightarrow \infty$, $\omega^T(t)$ converges to a {\K} transverse metric $\omega^T_\infty$ which satisfies

\[
\Ric(\omega^T_\infty)=-\omega^T_\infty.
\]
The limit $\omega^T_\infty$ does not depend on initial metric $\omega^T_0$.

Assume, furthermore, that basic Chern class satisfies $c^b_1(M)=-[\omega^T_0]$. Thus the transverse {\K}-Ricci flow is associated to a families of Vaisman metrics defined on the manifold $M$.
\end{theorem}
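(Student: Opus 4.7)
The plan is to combine the rescaling substitution already sketched in the excerpt with the general convergence theory for transverse {\K}-Ricci flow developed in \cite{Be-He-Ve}, and to apply the Vaisman deformation construction of \ref{deform} for the final assertion.

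First I would dispose of maximal existence. The substitution $\bar\omega^T(s) = e^t \omega^T(t)$ with $s = e^t - 1$ identifies solutions of the rescaled equation~\eqref{rescaled_transv_Ricci_eq} on $[0,T)$ with solutions of the unrescaled flow~\eqref{transv_Ricci_eq} on $[0, e^T - 1)$. Under the hypothesis $c_1^b(M) < 0$ there is a transverse {\K} form $\alpha^T$ with $-\alpha^T \in c_1^b(M)$; since the transverse {\K} cone $\mathcal{C}_M^b$ is open and convex, the class $[\omega^T_0] - s\, c_1^b(M) = [\omega^T_0] + s\,[\alpha^T]$ lies in $\mathcal{C}_M^b$ for every $s \geq 0$. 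The maximal-time theorem of the excerpt therefore yields $T_{max} = \infty$ for the unrescaled flow, and the substitution upgrades this to existence of the rescaled flow on $[0,\infty)$.

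For long-time convergence I would reduce to a transverse Monge-Amp\`ere equation in the spirit of Proposition~\ref{eq_Ricci_eq_Monge_equiv}, now with a time-dependent reference form representing the time-dependent class $[\omega^T(t)]$. Passing to basic cohomology in~\eqref{rescaled_transv_Ricci_eq} gives the linear ODE $\frac{d}{dt}[\omega^T(t)] = -c_1^b(M) - [\omega^T(t)]$, with solution $[\omega^T(t)] = e^{-t}[\omega^T_0] + (1-e^{-t})(-c_1^b(M))$ converging exponentially to $-c_1^b(M)$. Writing $\omega^T(t) = \hat\omega^T(t) + \ii\partial_b\bar\partial_b\varphi(t)$ turns the rescaled flow into a basic parabolic scalar equation of Monge-Amp\`ere type, the transverse analogue of Cao's {\K}-Ricci flow with $c_1 < 0$. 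The main obstacle is to derive uniform $C^0$, $C^2$, and higher-order basic estimates on $\varphi$; these are exactly the content of \cite{Be-He-Ve} and, in the present Vaisman setting, can also be argued directly using the one-to-one correspondence between basic objects and their projections on a local transversal (Remark~\ref{corespond}), combined with the basic min-max principle \cite[Proposition 6.9]{Be-He-Ve} and parabolic regularity as already exploited in the proof of Lemma~\ref{varphi_basic}. Passing to the limit yields a basic $\varphi_\infty$ such that $\omega^T_\infty := \hat\omega^T_\infty + \ii\partial_b\bar\partial_b\varphi_\infty$ is transverse {\K} and satisfies $\Ric^T(\omega^T_\infty) = -\omega^T_\infty$.

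Independence of $\omega^T_\infty$ from the initial datum follows from the transverse Aubin-Yau uniqueness: if $\omega, \omega'$ both lie in $-c_1^b(M)$ and satisfy $\Ric^T = -(\cdot)$, the basic $\partial\bar\partial$-lemma \cite[Proposition 3.5.1]{El Ka} writes $\omega' - \omega = \ii\partial_b\bar\partial_b u$ with $u$ basic; subtracting the two Einstein equations and applying the maximum principle at the extrema of $u$ forces $u$ to be constant, so $\omega = \omega'$. Finally, under the extra hypothesis $c_1^b(M) = -[\omega^T_0]$ the cohomological ODE specialises to $\frac{d}{dt}[\omega^T(t)] = [\omega^T_0] - [\omega^T(t)]$, whose solution with initial value $[\omega^T_0]$ is the constant $[\omega^T_0]$; hence $[\omega^T(t)] = [\omega^T_0]$ for every $t$, and the basic $\partial\bar\partial$-lemma produces a smooth family of basic functions $\varphi(t)$ with $\omega^T(t) = \omega^T_0 + \ii\partial_b\bar\partial_b\varphi(t)$. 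Plugging $\varphi(t)$ into the deformation formulas \eqref{J_t}--\eqref{g_t1} yields the desired one-parameter family of Vaisman structures on $M$ whose transverse {\K} forms are exactly $\omega^T(t)$.
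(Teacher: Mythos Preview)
Your proposal is correct and follows essentially the same route as the paper: the paper's ``proof'' amounts to invoking \cite{Be-He-Ve} for long-time existence, convergence, and uniqueness of the limit, and then appealing to the deformation of Subsection~\ref{deform} for the final assertion --- exactly as you do. You simply unpack what \cite{Be-He-Ve} supplies (the reduction to a basic Monge--Amp\`ere equation, the a priori estimates via the local transversal correspondence and the basic min--max principle) and add an explicit Aubin--Yau style uniqueness argument for $\omega^T_\infty$, whereas the paper absorbs all of this into the citation.
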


For the last assertion we use the deformation of a Vaisman structure described in Section \ref{deform}.
\begin{remark}
As a consequence, the above conditions imposed on the basic Chern class $c_1^b(M)$ are enough for the existence of a Vaisman manifold with transverse {\K}-Einstein metric.
\end{remark}

\subsection{An example}
In the final part of this section, we present an example. In accordance with \cite[Section 2]{Or-Ve1}, us consider a projective manifold $\mathbf{Q}$. Assume the existence of a {\K}-Einstein metric $\omega_\mathbf{Q}$, with $\Ric_\mathbf{Q}(\omega_\mathbf{Q})=-\omega_\mathbf{Q}$. Consider $L$ a positive line bundle on $\mathbf{Q}$. Denote by $\mathrm{Tot}^\circ(L)$ the total space of all non-zero vectors in $L$. If we denote by $\|v\|$ the length of a vector $v$, then $\tilde \omega=d d^c \|v\|^2$ is a {\K} potential on $\mathrm{Tot}^\circ(L)$.

Take  $q \in\RR^{>0}$, and choose $\sigma_q:\mathrm{Tot}^\circ(L) \rightarrow \mathrm{Tot}^\circ(L)$ to  map a vector $v$ in $qv$. Consider also the $\ZZ$ action on $\mathrm{Tot}^\circ(L)$ determined by $\sigma_q$. $\omega:=\frac{\tilde \omega}{ \|v\|^2}$ is projectable on the quotient manifold $\mathrm{Tot}^\circ(L)/\ZZ$, which is thus equipped with a natural Vaisman structure.

The leaves of the canonical foliation associated to the Vaisman manifold are in fact the fibers of $L$. We obtain a regular foliation  (\ie the leaves are compact and the projection to the leaf space is a smooth submersion).

Denote the transverse metric on the Vaisman foliation by $\omega^T_{KE}$. Let  $\Ric^T$ be the transverse Ricci operator. Locally, $\omega^T_{KE}$ and $\Ric^T$ project on $\omega_\mathbf{Q}$ and $\Ric_\mathbf{Q}(\omega_\mathbf{Q})$. We obtain
\begin{equation*}
   \Ric^T(\omega^T)=-\omega^T,
\end{equation*}
so the metric is \emph{transversally {\K}-Einstein}.

As in Section \ref{deform}, we use a basic function to deform the Vaisman metric to a new Vaisman metric $\omega_0$.
Hence, if $\omega^T_0$ is the corresponding transverse metric, we have
\begin{equation*}
    c^b_1(M)=-[\omega^T_{KE}]=-[\omega^T_0].
\end{equation*}

We can thus apply \ref{theorem} and obtain a transverse {\K}-Ricci flow.  This flow deforms the Vaisman metric $\omega_0$ into a new Vaisman metric which is transversally {\K}-Einstein. Due to the uniqueness of such metric, it should be in fact the initial Vaisman metric (with transverse component $\omega^T_{KE}$).

\begin{remark}
A Sasaki manifold with transverse Einstein metric is called $\eta$-Einstein (see \eg \cite{Bo-Ga-Ma}). Equivalently, a Sasaki manifold $(M,\phi,\xi,\eta,g)$ is $\eta$-Einstein if and only if there are $\lambda,\beta \in C^\infty (M)$ such that $Ric = \lambda g + \beta\eta\otimes\eta $.

It is clear that the above example represents the counterpart of this type of metrics in the setting of Vaisman manifolds.
\end{remark}

\section{Quasi-Einstein metrics and Einstein-Weyl structures on Vaisman manifolds}\label{eta}

In this section, we focuss our study to the investigation of the class of quasi-Einstein metrics first considered in LCK setting  by Goldberg and Vaisman in \cite{Go-Va}. Recall that a
LCK manifold $(M,J,g)$ is said to be a quasi-Einstein space if Ricci curvature $Ric$ of $M$ is given by $\Ric = \lambda g + \beta\theta\otimes\theta $, for some  $\lambda$ and $\beta \in C^\infty (M)$, where $\theta$ is the Lee form of $M$. As in the previous sections,  we work in the setting represented by Vaisman manifolds. In the following,  we use the fact that, locally, a Vaisman manifold can be regarded as a codimension 1 foliation, with leaves having a local Sasaki structure. More precisely, as we assumed $\|\theta\|=\|\theta^c\|=1$,  the Sasaki metric $g_S$ in the leaves is obtained by multiplying the metric induced by the Vaisman metric $g$ with the constant $1/4$. Thus we have the relations
\begin{equation*}
 \theta^c=2\eta, \quad V=\frac \xi 2.
\end{equation*}

 We use the standard relation in Sasaki geometry

 \begin{equation}\label{Ric_Ric_T}
 \Ric_S^T(X,Y)=\Ric_S(X,Y)+2g_S(X,Y),
 \end{equation}
which is valid for any transverse vector fields $X$ and $Y$ (see \eg \cite{Sm-Wa-Zh}). The multiplication of the metric by a constant do not change the Ricci tensor. Also, as the Lee vector field $U$ is parallel,
\begin{equation}
R(U,X,Y,Z)=0,
\end{equation}
for any tangent vector fields $X$, $Y$ and $Z$. We obtain
\begin{equation}\label{R_S_R}
\begin{cases}
\Ric^T(X,Y)=\Ric_S^T(X,Y), \\
\Ric(X,Y)=\Ric_S(X,Y),
\end{cases}
\end{equation}
for any $X, Y \in \Gamma(TM)$.
Thus on the Vaisman manifold $M$ the Ricci tensor is described by the relations
\begin{equation}\label{Ricci_S_R}
\begin{cases}
\Ric(V,V)=\frac n 2, \\
\Ric(V,X)=0 \quad \mbox{for any}\quad X \in \Gamma(Q), \\
\Ric(U,X)=0 \quad \mbox{for any}\quad X \in \Gamma(TM). \\
\end{cases}
\end{equation}

From \eqref{Ricci_S_R} the following result is straightforward.
\begin{claim}
If $\Ric=\lambda g+\alpha \theta^c\otimes\theta^c+\beta \theta\otimes\theta $, with $\lambda$, $\alpha$ and $\beta \in C^\infty (M)$, then $\lambda+\alpha=\frac n 2$ and $\beta=-\lambda$.
\end{claim}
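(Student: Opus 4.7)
The plan is to evaluate the assumed identity $\Ric=\lambda g+\alpha\,\theta^c\otimes\theta^c+\beta\,\theta\otimes\theta$ at the two distinguished directions $U$ and $V$ and compare with the values of the Ricci tensor recorded in \eqref{Ricci_S_R}. The necessary pairings are already in hand from the preliminaries: $\theta(U)=1$, $\theta^c(V)=1$, $\theta(V)=\theta^c(U)=0$, together with $g(U,U)=g(V,V)=1$ coming from the normalization $\|\theta\|=\|\theta^c\|=1$.

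First I would substitute $X=Y=V$ into the assumed formula. The term $\lambda g(V,V)$ contributes $\lambda$, the term $\alpha\,\theta^c(V)^2$ contributes $\alpha$, and $\beta\,\theta(V)^2$ vanishes. Matching with $\Ric(V,V)=n/2$ from \eqref{Ricci_S_R} yields $\lambda+\alpha=n/2$.

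Next I would substitute $X=Y=U$. This time $g(U,U)=1$, $\theta^c(U)=0$, and $\theta(U)=1$, so the right-hand side reduces to $\lambda+\beta$. The left-hand side equals $0$ by \eqref{Ricci_S_R} (which expresses the classical fact that for a parallel vector field the Ricci contraction vanishes). Therefore $\lambda+\beta=0$, i.e., $\beta=-\lambda$, which together with the previous relation closes the proof.

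The argument is a direct verification and there is no genuine obstacle: every ingredient is already available from the Vaisman normalizations and from the relations in \eqref{Ricci_S_R}. The only subtlety worth flagging is that, despite the presence of the $\theta^c\otimes\theta^c$ term in the ansatz, one really must test against both $U$ and $V$ in order to separate the coefficient $\alpha$ (which is detected by $V$) from $\beta$ (which is detected by $U$); testing only on the transverse distribution $Q$ would merely recover $\lambda$.
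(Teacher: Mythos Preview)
Your proof is correct and is precisely the ``straightforward'' verification from \eqref{Ricci_S_R} that the paper has in mind: evaluate the ansatz at $V$ to get $\lambda+\alpha=\tfrac n2$ and at $U$ to get $\lambda+\beta=0$.
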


We also prove
\begin{claim}
If $n\ge 3$, then the function $\lambda$ (and thus $\alpha$) is a constant.
\end{claim}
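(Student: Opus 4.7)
\medskip

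The strategy is to reduce the claim to a transverse version of the K\"ahler Schur lemma. Restricting the hypothesis to the horizontal distribution $Q$, where $\theta$ and $\theta^c$ both vanish, yields
\[
\Ric(X,Y) \,=\, \lambda\, g(X,Y), \qquad X,Y \in \Gamma(Q).
\]
Combining \eqref{R_S_R} with \eqref{Ric_Ric_T} (and using the proportionality of $g_S|_Q$ with $g^T$ recalled at the beginning of the section) one gets $\Ric^T(X,Y) = (\lambda + c)\, g^T(X,Y)$ on $Q$ for a universal constant $c$. In other words, the transverse K\"ahler metric $g^T$ is transversally K\"ahler-Einstein with Einstein factor $f := \lambda + c$.

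The first step would be to show that $f$ is basic. Since $\Ric^T$ and $g^T$ are projectable onto local transversals (Remark \ref{corespond}), so is $f = \tfrac{1}{2n}\, \tr_{g^T}\!\Ric^T$; in particular $U(\lambda) = V(\lambda) = 0$, so $\lambda$ is constant along the leaves of $\caf$.

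The second step is the transverse Schur identity. Let $\rho^T := \Ric^T(J\cdot,\cdot)$ be the transverse K\"ahler-Ricci form. The Einstein condition rewrites as $\rho^T = f\, \omega^T$. Applying $d_b$ and using $d_b\omega^T = 0$ together with $d_b\rho^T = 0$ (the latter because $\rho^T$ represents the basic Chern class $c_b^1(M)$), one obtains
\[
d_b f \wedge \omega^T \,=\, 0.
\]
A pointwise linear-algebra computation shows that, in transverse complex dimension at least $2$, the map $\alpha \mapsto \alpha \wedge \omega^T$ is injective on $1$-forms; hence $d_b f = 0$. Combined with the basicness from the first step, this forces $df = 0$ on the connected manifold $M$, and $\lambda$ is constant.

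I expect the main subtlety to be the careful bookkeeping of the constant $c$ identifying $\Ric^T$ with $\Ric + c\, g$ on $Q$, depending on the normalizations $\|\theta\|=1$, $\xi = 2V$, and $g_S = \tfrac{1}{4}\, g|_{\text{leaf}}$ used throughout this section. Once this is settled, the rest of the argument is routine. The transverse-Schur step in fact only needs $n \ge 2$; the paper's stronger hypothesis $n \ge 3$ may be tailored to a more direct proof via the contracted second Bianchi identity $\operatorname{div}\Ric = \tfrac{1}{2}\, d(\scal)$, in which one computes $d(\scal)$ and $\operatorname{div}\Ric$ explicitly using $\nabla\theta = 0$ and standard Vaisman formulas for $\nabla\theta^c$ and $\delta\theta^c$, and then matches components along $\theta$, $\theta^c$ and $Q^{\ast}$.
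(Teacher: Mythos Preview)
Your argument is correct and follows a genuinely different route from the paper. The paper proceeds exactly as you anticipate in your final paragraph: it applies the contracted second Bianchi identity $ds = 2\,\delta\Ric$, computes $\delta\Ric$ from the ansatz $\Ric = \lambda g + (\tfrac{n}{2}-\lambda)\,\theta^c\!\otimes\theta^c - \lambda\,\theta\!\otimes\theta$ using $\nabla\theta = 0$ (following the Sasakian computation in \cite[Ch.~11]{Bo-Ga}), and arrives at $ds = 2\,d_b\lambda$. Since also $s = 2n\lambda + \tfrac{n}{2}$, one obtains $(2n-2)\,d_b\lambda = 0$, whence $\lambda$ is constant. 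Your approach instead bypasses the divergence computation entirely: once $\Ric^T = f\,g^T$ with $f$ basic, closedness of $\rho^T$ and $\omega^T$ plus Lefschetz injectivity on $1$-forms give $d_b f = 0$ directly. Both arguments in fact only require $n\ge 2$; your transverse-Schur route is cleaner and more conceptual, while the paper's Bianchi computation is more hands-on and stays closer to the Riemannian formulation. The constant you call $c$ is $\tfrac{1}{2}$, by \eqref{Ric_Ric_T_Vaisman}.
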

\begin{proof}
As for the framework of Riemannian manifolds (see \cite[Ch.1]{Bes}), we apply the following differential operator on the symmetrical Ricci tensor
\begin{equation*}
\delta \Ric(X)=\sum_{j=1}^n \nabla_{E_j}\Ric(E_j, X),
\end{equation*}
for any orthonormal basis $\{E_j\}$ and $X \in\Gamma(TM)$. If $s$ is the scalar curvature, then $ds=2\delta \Ric$. Similar to the case of Sasaki manifolds (see \cite[Ch. 11]{Bo-Ga}), we get

\begin{equation*}
ds=2d\lambda-2V(\lambda)\theta^c-2U(\lambda)\theta=2 d_b \lambda.
\end{equation*}

Thus $ds(U)=ds(V)=0$, and $s$ is a basic function, $d_b s=d s$. The same holds for $\lambda$. As
\begin{equation*}
s=\sum_{j=1}^{2n}\Ric(e_j, e_j)+\Ric(U,U)+\Ric(V,V)=2n\lambda+\frac n 2,
\end{equation*}
the smooth functions $\lambda $ and $s$ are also related by the formula $d_b s=2n d_b \lambda$. The conclusion follows.
\hfill \end{proof}

Using also \eqref{R_S_R}, the counterpart  of  \eqref{Ric_Ric_T} for Vaisman manifolds is
 \begin{equation}\label{Ric_Ric_T_Vaisman}
 \Ric^T(X,Y)=\Ric(X,Y)+\frac 1 2g(X,Y),
 \end{equation}
Assume now that the Vaisman manifold is transversally Einstein, with respect to the constant $\lambda+\frac 1 2$. We apply the transverse homothetic transform described in Subsection \ref{deform}:

\begin{equation*}
\tilde g=ag+(a^2-a)(\theta^c\otimes\theta^c+\theta\otimes\theta).
\end{equation*}
For $X$, $Y \in \Gamma (Q)$, we get
\begin{equation}
\begin{split}
\Ric_{\tilde g}(X,Y)&=\Ric_{\tilde g}^T(X,Y)-\frac 1 2 \tilde g(X,Y) \\
&=\Ric(X,Y)+\frac 1 2 (1-a) g(X,Y) \\
&=\frac{2\lambda+a-1}{2a}\tilde g(X,Y).
\end{split}
\end{equation}
If $\lambda>-\frac 1 2$, then taking $a=\frac{2\lambda+1}{n+1}$ we obtain
\begin{equation*}
\Ric_{\tilde g}(X,Y)=\frac n 2 \tilde g(X,Y).
\end{equation*}
We proved the following result.
\begin{proposition}\label{prop_}
Assume that a Vaisman manifold is transversally {\K}-Einstein with respect to the canonical foliation, with the constant $\lambda +\frac 1 2>0$. Then, by an appropriate transverse $Q$-homothetic deformation, one can transform the Vaisman metric into a transverse {\K}-Einstein  metric, this time $Q$ being the distribution complementary to the Lee vector field.
\end{proposition}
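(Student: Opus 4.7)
The plan is to apply the $Q$-homothetic deformation of Subsection \ref{deform_homot} with scaling constant $a=\frac{2\lambda+1}{n+1}$, which is well defined and positive thanks to the hypothesis $\lambda+\tfrac{1}{2}>0$. This yields a new Vaisman structure $(M,J,\tilde g)$ with Lee form $\tilde\theta=a\theta$, unit dual fields $\tilde U=U/a$ and $\tilde V=V/a$, and transverse metric $\tilde g^T=ag^T$ on the original transverse distribution $Q$.

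First I would invoke the computation carried out immediately above the proposition, which already gives $\Ric_{\tilde g}(X,Y)=\tfrac{n}{2}\,\tilde g(X,Y)$ for all $X,Y\in\Gamma(Q)$; the choice $a=(2\lambda+1)/(n+1)$ was designed precisely so that this equality holds. Next I would extend the identity to the distribution complementary to the new Lee vector field, namely $\tilde Q:=\operatorname{span}(\tilde V)\oplus Q$. Since $(M,J,\tilde g)$ is itself Vaisman, the relations \eqref{Ricci_S_R} apply verbatim to it, giving $\Ric_{\tilde g}(\tilde V,\tilde V)=\tfrac{n}{2}=\tfrac{n}{2}\,\tilde g(\tilde V,\tilde V)$ and $\Ric_{\tilde g}(\tilde V,X)=0$ for any $X\in\Gamma(Q)$. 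Combining these two cases yields $\Ric_{\tilde g}(X,Y)=\tfrac{n}{2}\,\tilde g(X,Y)$ for every $X,Y\in\Gamma(\tilde Q)$.

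To close the argument, I would use that $\tilde U$ is parallel for the Levi-Civita connection of $\tilde g$, so the codimension-one foliation it generates is totally geodesic with parallel normal direction. Arguing exactly as in the derivation of \eqref{Ric_Ric_T_Vaisman} (but now with respect to the $1$-dimensional Lee foliation, whose Reeb-type contribution vanishes because $\tilde U$ is parallel rather than merely Killing), the Bott curvature of this foliation agrees with the ambient curvature on transverse arguments, and the transverse Ricci tensor equals the restriction $\Ric_{\tilde g}|_{\tilde Q\times\tilde Q}$. The previous step then immediately produces the transverse \K--Einstein identity $\Ric^T_{\tilde Q}=\tfrac{n}{2}\,\tilde g|_{\tilde Q}$, which is the content of the proposition.

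The only obstacle, such as it is, amounts to careful bookkeeping: one must track how $U$, $V$, $\theta$, $\theta^c$ and the transverse Ricci rescale under the anisotropic deformation $\tilde g=ag+(a^2-a)(\theta^c\otimes\theta^c+\theta\otimes\theta)$, and verify that the rescaled structure still satisfies the preliminary identities \eqref{Ricci_S_R} (which is automatic, since it is again a Vaisman structure on the same underlying complex manifold). No analytic input is required beyond the formulas collected in Sections \ref{Vaisman_manifolds}--\ref{deformations} and the algebraic computation preceding the proposition.
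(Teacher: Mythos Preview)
Your proposal is correct and follows essentially the same approach as the paper: apply the $Q$-homothetic deformation with $a=\frac{2\lambda+1}{n+1}$, use the computation preceding the proposition to obtain $\Ric_{\tilde g}=\tfrac{n}{2}\tilde g$ on the original $2n$-dimensional $Q$, and then extend to the $(2n+1)$-dimensional complement of the Lee field. The paper leaves the $\tilde V$-direction implicit (it follows from \eqref{Ricci_S_R} applied to the new Vaisman structure), whereas you spell this out and add the observation that the transverse Ricci of the Lee foliation coincides with $\Ric_{\tilde g}|_{\tilde Q\times\tilde Q}$; this is a welcome clarification but not a different argument.
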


\begin{remark}\label{remark_}
Using the definition from \cite{Ch}, we  actually showed that if the Vaisman manifold is generalized quasi Einstein  with $\Ric=\lambda g+(\frac n 2-\lambda)\theta^c\otimes\theta^c-\lambda\theta\otimes\theta$ and $\lambda>-\frac 1 2$, then we can deform the metric to a Vaisman metric $\tilde g$ which is quasi Einstein, $\Ric_{\tilde g}=\frac n 2 \tilde g-\frac n 2 \tilde\theta\otimes \tilde\theta$, with $\tilde \theta=a\theta$.
\end{remark}

\begin{claim}\label{claim_}
Assume that $(M, g, \theta)$ is a Vaisman structure with $\| \theta\|=1$. Then $(g,\theta)$ is Einstein-Weyl if and only if the Vaisman metric is quasi Einstein.
\end{claim}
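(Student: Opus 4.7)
The plan is to compute the difference $\widetilde{\Ric} - \Ric$ via the standard conformal-change-of-Ricci formula on the K\"ahler cover $(\tilde M, \tilde g)$, and then read off the Einstein-Weyl condition as an algebraic relation between $\Ric$, $g$, and $\theta\otimes\theta$.

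First I would work on the universal cover, where $\tilde g = e^{-f}g$ with $df = \theta$. Setting $\sigma = -f/2$ so that $\tilde g = e^{2\sigma}g$, the 1-form $d\sigma = -\tfrac{1}{2}\theta$ inherits parallelism from $\theta$, so $\nabla^g d\sigma = 0$ and $\Delta^g\sigma = 0$; moreover $|d\sigma|^2 = 1/4$ and $d\sigma\otimes d\sigma = \tfrac{1}{4}\theta\otimes\theta$. Plugging these into the standard conformal Ricci identity in dimension $m = 2n+2$ kills all Hessian and Laplacian terms and leaves the clean relation
\[
\widetilde{\Ric} \;=\; \Ric + \tfrac{n}{2}\,\theta\otimes\theta - \tfrac{n}{2}\,g. \qquad (\ast)
\]

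For the forward implication, since $D$ is the projection of the Levi-Civita connection of $\tilde g$, the Einstein-Weyl hypothesis reads $\Ric^D = \widetilde{\Ric} = \lambda\tilde g = \lambda e^{-f}g$, which combined with $(\ast)$ gives
\[
\Ric \;=\; \lambda\,e^{-f}g + \tfrac{n}{2}g - \tfrac{n}{2}\theta\otimes\theta.
\]
Since $\Ric$, $g$, and $\theta\otimes\theta$ descend to $M$, the factor $\lambda e^{-f}$ must be invariant under the deck group. But under a deck transformation $\gamma$ one has $\gamma^*f = f + c_\gamma$, with $\gamma\mapsto c_\gamma$ the nontrivial period character of $\theta$ --- nontrivial because a compact Vaisman manifold is never globally conformally K\"ahler (its first Betti number is odd). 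Hence $\lambda = 0$ and $\Ric = \tfrac{n}{2}g - \tfrac{n}{2}\theta\otimes\theta$, which is quasi-Einstein. Conversely, any relation $\Ric = \lambda g + \beta\,\theta\otimes\theta$ is pinned down by the earlier claim of this section (applied with $\alpha = 0$) to $\lambda = n/2$, $\beta = -n/2$; substituting into $(\ast)$ gives $\widetilde{\Ric} = 0$, so $(g,\theta)$ is Einstein-Weyl with constant $\lambda = 0$.

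The main obstacle is the descent step --- showing that the Einstein-Weyl constant is forced to vanish because $\lambda e^{-f}g$ has to be a globally defined tensor on $M$ while $e^{-f}$ is only equivariant under a nontrivial character of the deck group. Everything else is routine: the conformal Ricci identity $(\ast)$ follows immediately from parallelism of $\theta$, and the converse is a direct substitution using the pointwise constraints $\lambda + \alpha = n/2$, $\beta = -\lambda$ established just above.
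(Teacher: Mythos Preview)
Your argument is correct and arrives at exactly the same identity as the paper,
\[
\Ric^D \;=\; \Ric + \tfrac{n}{2}\,\theta\otimes\theta - \tfrac{n}{2}\,g,
\]
but the route is genuinely different in two respects. First, the paper obtains this formula by quoting Higa's general expression for the Ricci tensor of a Weyl connection and specializing via $\nabla\theta=0$, $\|\theta\|=1$; you instead pass to the {\K} cover and use the standard conformal-change-of-Ricci formula, with the Hessian and Laplacian terms killed by parallelism of $\theta$. Second, and more substantially, the paper disposes of the Einstein constant by invoking Gauduchon's theorem (Einstein--Weyl on a compact Vaisman manifold forces $\Ric^D\equiv 0$), whereas you replace this citation by the direct descent argument: $\Ric$, $g$, $\theta\otimes\theta$ are deck-invariant, so $\lambda e^{-f}$ must be as well, and the nontriviality of the period character of $\theta$ forces $\lambda=0$. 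Your converse is identical to the paper's, using the earlier constraint $\lambda=n/2$, $\beta=-n/2$. What you gain is self-containment---no appeal to Higa or Gauduchon---and a transparent topological explanation of why the Einstein constant vanishes; what the paper's version gains is brevity, since both ingredients are standard references. One small remark: the nontriviality of $[\theta]\in H^1(M;\RR)$ follows already from $\theta$ being parallel with $\|\theta\|=1$ on a compact manifold (a parallel exact form is harmonic and exact, hence zero), so the $b_1$-odd fact, while true, is more than you need.
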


\begin{proof}
Using \cite{Hi}, we have
\begin{equation*}
\begin{split}
\Ric^D(X,Y)&=\Ric(X,Y)+\frac 1 2\nabla_Y\theta (X)-\frac {2n+1}{2}\nabla_X\theta (Y)+\frac n 2\theta(X)\theta(Y)  \\
                 &+\frac 1 2(\delta\theta-n\|\theta\|^2)g(X,Y) \\
                 &=\Ric(X,Y)+\frac n 2 {\theta(X)}\cdot  {\theta(Y)}-\frac n 2 g(X,Y),
\end{split}
\end{equation*}
as $\theta$ is parallel, and $\| \theta\|=1$. On the Vaisman manifold $M$,   $(g, \theta)$ is an Einstein-Weyl structure if and only if $\Ric^D\equiv 0$ \cite{Ga}, and the claim follows.
\hfill\end{proof}

From \ref{prop_} and \ref{claim_} we obtain the following result.

\begin{proposition}\label{prop1}
If $(M, g, \theta)$ is a Vaisman structure with $\| \theta\|=1$,  which is transversally {\K}-Einstein with respect to the canonical foliation, with the constant $\lambda+\frac 1 2>0$, then one  can find a second Vaisman structure $(M, \tilde g, \tilde \theta)$ for which  $(\tilde g, \tilde \theta)$ is an Einstein-Weyl structure.
\end{proposition}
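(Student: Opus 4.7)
The plan is to chain together Proposition 4.3 and Claim 4.4: first apply the $Q$-homothetic deformation of Subsection 2.4.2 to convert the transversally {\K}-Einstein Vaisman metric into a quasi-Einstein Vaisman metric, then invoke Claim 4.4 to translate ``quasi-Einstein'' into ``Einstein-Weyl''. The only real bookkeeping issue is to verify that the normalization $\|\tilde\theta\|_{\tilde g}=1$ survives the deformation, since that was a standing hypothesis in Claim 4.4.

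Concretely, I would set $a := (2\lambda+1)/(n+1)>0$, the specific constant singled out in the proof of Proposition 4.3, and apply the deformation $\tilde g = ag + (a^2-a)(\theta^c\otimes\theta^c + \theta\otimes\theta)$ with new Lee form $\tilde\theta = a\theta$ (and unchanged complex structure $J$). This is a Vaisman structure by Subsection 2.4.2, and the computation preceding Remark 4.4 shows $\Ric_{\tilde g} = (n/2)\,\tilde g - (n/2)\,\tilde\theta\otimes\tilde\theta$, so $\tilde g$ is quasi-Einstein in the Goldberg--Vaisman sense.

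Next I would check the unit-norm condition for $\tilde\theta$. Solving $\tilde g(\tilde U,X)=\tilde\theta(X)$ using the explicit form of $\tilde g$ yields $\tilde U = U/a$ (the terms in $\theta,\theta^c$ contribute $(a^2-a)\theta(X)$, while $ag(U/a,X)=\theta(X)$, summing to $a\theta(X)=\tilde\theta(X)$). Hence
\[
\|\tilde\theta\|_{\tilde g}^2 \;=\; \tilde\theta(\tilde U) \;=\; a\,\theta(U/a) \;=\; \theta(U) \;=\; 1,
\]
so $(\tilde g, \tilde\theta)$ meets the hypothesis of Claim 4.4.

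Finally I would apply Claim 4.4 to the quasi-Einstein Vaisman structure $(M,\tilde g,\tilde\theta)$ to conclude that $(\tilde g,\tilde\theta)$ is an Einstein-Weyl structure, which is exactly the desired statement. I do not expect any genuine obstacle: the analytic content was already absorbed into Proposition 4.3 and Claim 4.4, and the only quantitative point to track is the unit-norm normalization verified above.
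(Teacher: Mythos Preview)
Your proposal is correct and matches the paper's own proof, which simply states that the result follows from \ref{prop_} and \ref{claim_}. Your explicit check that $\|\tilde\theta\|_{\tilde g}=1$ is a welcome addition the paper leaves implicit (note a harmless slip: the $\theta,\theta^c$ terms contribute $(a-1)\theta(X)$, not $(a^2-a)\theta(X)$, but your conclusion $\tilde U=U/a$ and the final norm are correct).
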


\begin{example}
A Vaisman manifold $(M, J, g)$ is said to be a $\mathcal{P}_0K$ manifold if the curvature tensor of the Weyl connection vanishes identically.
If the real dimension of $(M, J, g)$ is $2n+2$, then a direct computation shows that the Ricci tensor of a $\mathcal{P}_0K$ manifold is given by (see \cite{Ia-Ma-Or})
\begin{equation}\label{eqf}
\Ric=\frac{n\|\theta\|^2}{2}g- \frac{n}{2}\theta\otimes\theta.
\end{equation}
Hence, any $\mathcal{P}_0K$ manifold is quasi-Einstein with $\lambda=\frac{n\|\theta\|^2}{2}$ and $\beta=-\frac{n}{2}$.
 Note that $\lambda$ and $\beta$ in \eqref{eqf} are both constant, since $\|\theta\|$ is known to be constant on Vaisman manifolds. In particular, if  $\|\theta\|=1$, then $(g,\theta)$ is an Einstein-Weyl structure. Concerning the geometry of $\mathcal{P}_0K$ spaces, it is known that a $\mathcal{P}_0K$ manifold is locally analytically homothetic to a Hopf manifold and, according to \cite[Proposition 2.7]{Ia-Ma-Or}, such  manifolds are locally symmetric spaces.

Finally, we want to emphasize that the name of $\mathcal{P}_0K$ manifold originates from the old name of Vaisman manifolds, which was  $\mathcal{P}K$ manifold. Recall that the study of Vaisman manifolds was initiated in \cite{Va0} under the name of generalized Hopf manifolds (GHM) or $PK$-manifolds, and such spaces were investigated under this name for a long period of time. However, in the monograph \cite{Dr-Or}, the authors explained why the old name is not appropriate, proposing to call a LCK manifold with parallel Lee form a Vaisman manifold. This name was later unanimously accepted by authors working in the field. Hence, the name of $\mathcal{P}_0K$ manifold was originally chosen to highlight that it is a particular type of $\mathcal{P}K$ manifold, \emph{i.e.} a Vaisman manifold or a GHM manifold, satisfying a nullity condition, that is the curvature tensor of the Weyl connection vanishes identically.
\end{example}

\subsection*{Acknowledgement} The authors would like to thank Professor Luigi Vezzoni for bringing the article \cite{Be-He-Ve} to their attention and for useful comments on the first draft of this paper. The authors are also extremely grateful to Professor Liviu Ornea for the useful suggestions and remarks that led to the improvement of the article.

\hfill

{\small

	\noindent {\sc Vladimir Slesar\\
		 University ``Politehnica'' of Bucharest, Faculty of Applied Sciences, \\
			313 Splaiul Independen\c{t}ei,
			060042 Bucharest, Romania}, and:\\
		{\sc University of Bucharest, Faculty of Mathematics and Informatics, \\
		Research Center in Geometry,  Topology and Algebra, \\
		14 Academiei str., 70109 Bucharest, Romania\\
	\tt vladimir.slesar@upb.ro}

	\hfill
	
	\noindent {\sc Gabriel-Eduard V\^{\i}lcu\\
	{\sc University of Bucharest, Faculty of Mathematics and Informatics, \\
		Research Center in Geometry,  Topology and Algebra, \\
		14 Academiei str., 70109 Bucharest, Romania}, and:\\
    {\sc University ``Politehnica'' of Bucharest, Faculty of Applied Sciences\\
    Department of Mathematics and Informatics,\\ 313 Splaiul Independen\c{t}ei,
			060042 Bucharest, Romania}, and:\\
    {\sc Department of Cybernetics, Economic Informatics, Finance and Accountancy, \\
		Petroleum-Gas University of Ploie\c sti, \\
		Bd. Bucure\c sti, Nr. 39, 100680 Ploie\c sti, Romania}\\
		\tt gvilcu@upg-ploiesti.ro}

\end{document}